\documentclass{article}
\usepackage{amsmath,amsfonts,amssymb,amsthm}
\usepackage{graphicx}
\usepackage{caption}
\usepackage{subcaption}
\usepackage{natbib}
\usepackage{paralist}
\usepackage{algorithm}
\usepackage{algorithmic}
\usepackage{url}
\usepackage{amsmath}
\usepackage{amsthm}
\usepackage{amsfonts}
\usepackage{xcolor}
\usepackage{booktabs}
\usepackage{siunitx}
\usepackage[hidelinks]{hyperref}
\newcommand{\INPUT}{\item[\textbf{Input:}]}
\newcommand{\OUTPUT}{\item[\textbf{Output:}]}

\DeclareMathOperator*{\argmin}{arg\,min}

\renewcommand{\emptyset}{\varnothing}
\renewcommand{\le}{\leqslant}
\renewcommand{\ge}{\geqslant}

\newcommand{\bsx}{\boldsymbol{x}}
\newcommand{\bsz}{\boldsymbol{z}}

\newcommand{\natu}{\mathbb{N}}

\newcommand{\e}{\mathbb{E}}
\newcommand{\var}{\mathrm{var}}

\newcommand{\vol}{\mathrm{vol}}

\newcommand{\wt}{\widetilde}
\newcommand{\tran}{\mathsf{T}}

\newcommand{\real}{\mathbb{R}}
\newcommand{\ints}{\mathbb{Z}}
\newcommand{\toru}{\mathbb{T}}
\newcommand{\dist}{\mathrm{dist}}

\newcommand{\bsp}{\boldsymbol{p}}

\newcommand{\bsw}{\boldsymbol{w}}
\newcommand{\bsy}{\boldsymbol{y}}
\newcommand{\bsone}{\boldsymbol{1}}

\newcommand{\ca}{\mathcal{A}}
\newcommand{\cp}{\mathcal{P}}
\newcommand{\ce}{\mathcal{E}}
\newcommand{\cek}{\mathcal{E}_k}

\newcommand{\ch}{\mathcal{H}}
\newcommand{\rd}{\,\mathrm{d}}

\newcommand{\runif}{\mathbf{U}}

\newcommand{\bszero}{\boldsymbol{0}}

\newcommand{\bm}{\mathbb{M}}
\newcommand{\mup}{\overline{\bm}}
\newcommand{\mupkmo}{\overline{\bm}_{k-1}}
\newcommand{\mlo}{\underline{\bm}}
\newcommand{\wh}{\widehat}
\newcommand{\cb}{\mathcal{B}}
\newcommand{\cl}{\mathcal{L}}

\newcommand{\co}{\mathcal{O}}
\newcommand{\bdy}{\mathrm{bdy}}
\newcommand{\unpp}{\mathrm{unpp}}
\newcommand{\ma}{\mathrm{MA}}
\newcommand{\bsa}{\boldsymbol{a}}
\newcommand{\bsb}{\boldsymbol{b}}
\newcommand{\bst}{\boldsymbol{t}}
\newcommand{\bse}{\boldsymbol{e}}

\newcommand{\bsu}{\boldsymbol{u}}
\newcommand{\diam}{\mathrm{diam}}

\newcommand{\phm}{\phantom{-}} 

\newtheorem{theorem}{Theorem}

\newtheorem{lemma}{Lemma}

\theoremstyle{definition}
\newtheorem{definition}{Definition}

\newtheorem{remark}{Remark}
\title{Randomized quasi-Monte Carlo for walk on spheres}
\author{Valerie N. P. Ho\\Stanford University \and Art B. Owen \\Stanford University}
\date{July 2026}
\begin{document}
\maketitle
\begin{abstract}
We investigate the use of randomized quasi-Monte Carlo (RQMC) in walk on spheres algorithms to solve boundary value problems for functions with Dirichlet boundary conditions in $\real^d$. For harmonic functions with $d=2$, the integrands of interest are periodic indicator functions over regions $\Theta$ in the torus $\toru^k$.  We give conditions for $\partial\Theta$ to have $k-1$ dimensional Minkowski content which allows us to use results of He and Wang (2015).  The RQMC estimates involve multiple values of $k$. We see sampling variances decreasing with the number $n$ of sample points at slightly better than Monte Carlo rates. The median variance rate in $4$ RQMC methods over $5$ worked examples, including some with $d=3$ and some with nonzero source functions, was slightly better than $O(n^{-1.1})$. The variance reduction factors ranged from $1.8$ to $10.7$ at $n=2^{17}$. None of the four RQMC methods dominated the others.
\end{abstract}
\section{Introduction}

The walk on spheres (WoS) algorithm is a grid free way to solve some differential equations at points $\bsz$ in a closed region $\Omega\subset\real^d$.  It dates back to \cite{mull:1956}, with many early results included in the monograph of \cite{sabe:1991}. It has historically been used less often than the finite element method (FEM) but there has been a resurgence of interest in it, with \cite{sawh:cran:2020} being a notable recent publication. They list several advantages of WoS over FEM, including: WoS can provide a solution at just the most important points in $\Omega$ without requiring one to solve the equations everywhere, WoS saves the cost of generating an FEM grid which can be quite high when $\Omega\in\real^3$ is bounded by millions of triangles, and as a Monte Carlo method, WoS is easily parallelized. 

In this paper we will use randomized quasi-Monte Carlo (RQMC) in WoS computations.  There was earlier work using quasi-Monte Carlo (QMC) to solve WoS problems by \cite{masc:hwan:2003} and \cite{masc:kara:hwan:2004}.  The WoS integrands typically have infinite variation in the sense of Hardy and Krause. We say that they are `not in BVHK'. This infinite variation complicates the theoretical treatment of their QMC error. In RQMC, by contrast, we only need the integrand to be in $L_2$ in order to ensure a mean squared error of $o(1/n)$ on $n$ function evaluations.

What we typically see in our examples is a mean squared error (MSE) for RQMC that, for $n$ sample points, decreases in proportion to a power of $n$ slightly below $-1$, commonly comparable to $n^{-1.1}$ or $n^{-1.2}$.  A rate of $n^{-1.2}$ can yield a variance reduction of five to ten fold without using extremely large sample sizes.  The WoS integrands are discontinuous. It is common for them to be indicator functions of sets that do not have axis parallel boundaries, which is why they are not in BVHK.  To explain the variance reductions we see, we draw on and extend some work of \cite{he:wang:2015} and \cite{liu:2025}. These papers show improved convergence rates when the integrand is the indicator function of a set $\Theta\subset[0,1)^d$ whose boundary $\partial\Theta$ has finite $d-1$ dimensional Minkowski content, or when it is a sufficiently regular function multiplied by such an indicator. For a $d$ dimensional integrand they get a variance of $O(n^{-1-1/d})$. A major part of this paper is devoted to giving sufficient conditions for the sets in WoS to have finite Minkowski content. In our setting, the integrand also has unbounded dimension, though it is commonly truncated to finite dimension.

The path to finite Minkowski content is long because even if $\partial\Omega$ is made up of infinitely differentiable curves there can be severe pathologies as shown by \cite{choi:choi:moon:1997} whose work we describe below.  They rule out some of those pathologies using real analytic curves.  In our analysis of the WoS setting, the distance to subsets of $\partial\Omega$ is critically important. Under our conditions, that function will be piecewise $C^\infty$ but it will only be Lipschitz globally and this complicates some of the manifolds that we study.

This paper is organized as follows.
Section~\ref{sec:background} gives a basic account of WoS to orient the reader and a brief note on the RQMC methods we use. Section~\ref{sec:gasket} illustrates the use of RQMC on the WoS problem for a gasket example taken from \cite{wosoneweekend}.  There we see that RQMC methods have an MSE that decays faster than $n^{-1}$ for $n\le 2^{17}$.
Section~\ref{sec:axes} provides background on Minkowski content, rectifiability and the medial axes of domains in $\real^2$.  Section~\ref{sec:diffgeo} lists some theorems from differential geometry that we need. Section~\ref{sec:main} has our main theorem. Letting $\Theta_k\subset[0,1)^k$ represent the set of RQMC points that cause WoS to reach a given portion of $\partial\Omega\subset\real^2$ in exactly $k$ steps, we give sufficient conditions for $\partial \Theta_k$ to have finite $k-1$ dimensional Minkowski content.
Section~\ref{sec:rqmcforwos} discusses how the main theorem applies to RQMC problems.
Section~\ref{sec:moreexamples} briefly describes more numerical examples including three dimensional domains that are not covered by our theorems. For four RQMC methods and five examples we see modest variance reduction factors ranging from $1.8$ to $10.7$ at $n=2^{17}$. The code used for our numerical examples is publicly available at
\url{https://github.com/hoval58/RQMC-WoS}.
Section~\ref{sec:discussion} has some final comments.

\section{Background on WoS and RQMC}\label{sec:background}

Here we describe WoS and RQMC.  For an (R)QMC readership we present WoS in a way that shows that a random walk algorithm is very natural and then we use RQMC in that random walk. For this readership we only emphasize a few RQMC details that are specific to this project.

Here is some notation that we use throughout the paper. The positive integers are denoted by $\natu$ and $\natu_0=\natu\cup\{0\}$.
For a set $\Theta\subset\real^d$ we use $\Theta^\circ$ for its interior, $\overline{\Theta}$ for its closure and $\partial\Theta$ for its boundary. If $\Theta$ is Lebesgue measurable, then $\vol(\Theta)$ is its measure.
For nonempty $\Theta\subset\real^d$ and $\bsz\in\real^d$ we define the distance function
$$\dist(\bsz,\Theta) = \inf_{\tilde\bsz\in\Theta}\Vert\bsz-\tilde\bsz\Vert.$$
For fixed $\Theta$, $\dist(\bsz,\Theta)$ is a Lipschitz function of $\bsz$. If $\Theta$ is closed, then the infimum is attained, i.e., $\dist(\bsz,\Theta)=\Vert\bsz-\bar\bsz\Vert$ where $\bar\bsz\in\Theta$ is a (not necessarily unique) projection of $\bsz$ onto $\Theta$. We use $\bsone_\Theta$ for the function taking the value $1$ on $\Theta$ and $0$ on $\Theta^c$.

For $\varepsilon\ge0$, we will use balls and spheres
$B(\bsz,\varepsilon) =\{\tilde\bsz\in\real^d\mid \Vert\tilde\bsz-\bsz\Vert\le\varepsilon\}$ and $S(\bsz,\varepsilon)=\{\tilde\bsz\in\real^d\mid\Vert\tilde\bsz-\bsz\Vert =\varepsilon\}=\partial B(\bsz,\varepsilon)$, respectively. We follow \cite{choi:choi:moon:1997} in allowing $\varepsilon=0$, and then $B(\bsz,0)=S(\bsz,0)=\{\bsz\}$. When we need to indicate the dimension, then we use $B_d$ and $S_d$.

\subsection{Basic walk on spheres}
The problem in WoS is to solve a differential equation at some point $\bsz_0$ inside a bounded domain $\Omega\subset\real^d$.  In the simplest setting, we want to compute $u(\bsz_0)$ for a harmonic function $u:\Omega\to\real$ at a point $\bsz_0\in\Omega^\circ$, and we are given a boundary function $h$, so that $u(\bsz)=h(\bsz)$ for $\bsz\in\partial\Omega$. This is the Dirichlet boundary condition. The Neumann boundary condition specifies the outward normal derivative on $\partial \Omega$, and hybrid combinations of these two types also exist. However, in this paper we only consider boundary value problems (BVPs) with a Dirichlet condition.

A harmonic function $u$ has Laplacian $\Delta u(\bsz) =\sum_{j=1}^d\partial^2u(\bsz)/\partial z_j^2=0$.  For standard coordinate vectors $\bse_j$, writing
$$
0=\Delta u(\bsz_0)\approx \frac1{\varepsilon^2}\sum_{j=1}^d
\bigl(u(\bsz_0+\varepsilon \bse_j)-2u(\bsz_0)+u(\bsz_0-\varepsilon\bse_j)\bigr)$$
shows that $u(\bsz_0)$ is approximately the average of $u$ at $2d$ neighbors.  We could pick one of those neighbors (call it $\bsz_1$) at random and then $u(\bsz_1)$ is a nearly unbiased estimate of $u(\bsz_0)$. The value $u(\bsz_1)$ is nearly the average of its $2d$ neighbors and we could keep randomly selecting neighbors until we get $\bsz_k$ in (or very close to) $\partial\Omega$ where $u$ is known. Now $u(\bsz_k)$ is a nearly unbiased estimate of $u(\bsz_0)$. 

As $\varepsilon\to0$ the random walk converges to a Brownian motion in $\Omega$ that first hits $\partial\Omega$ at some time $\tau>0$. Then $u(\bsz_\tau)$ is an unbiased estimate of $u(\bsz_0)$. That is, $u(\bsz_0)=\e(h(\bsz_\tau))$ where $\tau = \inf\{t\in[0,\infty)\mid \bsz_t\in\partial\Omega\}$.

The key insight in WoS is that when a Brownian motion exits the ball $B(\bsz_k,r)$ for $r>0$, it exits with a uniform distribution over $\partial B(\bsz_k,r)$. Instead of generating a Brownian motion starting at $\bsz_0$, we can simply take $\bsz_{k}\sim\runif(S(\bsz_{k-1},r_k))$ for integers $k\ge1$ where $r_k=\dist(\bsz_{k-1},\partial\Omega)$.

For most geometries, the WoS algorithm will never produce $\bsz_k\in\partial\Omega$. Instead we stop the WoS when $\bsz_k\in\partial\Omega_{\varepsilon}:=\{\bsz\in\Omega\mid \dist(\bsz,\partial\Omega)<\varepsilon\}$. This happens at step $\tau = \min\{k\in\natu_0\mid \bsz_k\in\partial\Omega_{\varepsilon}\}$. We then return the value $h(\bar\bsz_{\tau})$ where for $\bsz\in\Omega$, $\bar\bsz$ is the projection of $\bsz$ onto $\partial\Omega$ with an arbitrary tie-breaker rule when that projection is not unique. The $i$-th run in an MC approach to WoS generates a trajectory $\bsz_{i,k}$ for $1\le k\le \tau_i$ starting at $\bsz_{i,0}=\bsz_0$. Then a Monte Carlo WoS estimate of $u(\bsz_0)$ is
\begin{align}\label{eq:mcwos}
\hat u(\bsz_0)=\frac1n\sum_{i=1}^n h(\bar \bsz_{i,\tau_i}).
\end{align}

A recent innovation of \cite{czek:etal:2024} pools information over a set of nearby starting points.  When $\tilde\bsz_0\in B(\bsz_0,r_1)$, the authors apply an importance sampling weight to the distribution of $\bsz_1$ and then reuse the entire walk for $\bsz_0$ in their estimate of $u(\tilde\bsz_0)$. They do not need to apply weights for $\bsz_k$ with $k\ge2$.

\subsection{Bias and running time}
A WoS algorithm stops when $\dist(\bsz_k,\partial\Omega)<\varepsilon$. This raises questions about how the bias from stopping short of the boundary and how the number of steps to reach the boundary both depend on $\varepsilon$.
\cite{bind:brav:2012} give a very complete analysis of the number of steps to reach $\partial\Omega\subset\real^d$.  They have a notion of $\alpha$-thickness for $\alpha\in[0,d]$ where $\alpha>0$ allows for fractal behavior of $\partial\Omega$. 
For any $\alpha<2$, the expected number of steps to get close to $\partial\Omega$ is $O(\log(1/\varepsilon))$ as $\varepsilon\to0$. For $\alpha=2$ the rate is $O( \log^2(1/\varepsilon))$. For $\alpha>2$ it is $O((1/\varepsilon)^{2-4/\alpha})$. The domains in our examples are all $0$-thick. That can be verified by applying Definition 1 of \cite{bind:brav:2012}. As a result, the number of steps taken are all $O( \log(1/\varepsilon))$. 
In addition to a bound on the expected number of steps, they also consider the tail probability. Their Remark $3$ gives an exponential decay for the probability that more than $k$ steps are required.

The present understanding of the bias in WoS is less well developed than the convergence time. There is extensive empirical work in \cite{masc:hwan:2003} where the bias was seen to be $O(\varepsilon)$ in every one of a range of examples.  They reason that the problem has distorted $\partial\Omega$ by $O(\varepsilon)$ and in that case a bias of $O(\varepsilon)$ is natural. \cite{sawh:cran:2020} report on the small bias in WoS in some large scale computations.

\subsection{Nonzero source term}\label{subsec:source}

A more general boundary value problem has $u(\bsz)$ known for $\bsz\in\partial\Omega$ and subject to $\Delta u(\bsz)=g(\bsz)$ for $\bsz\in\Omega$.
When the source term $g$ is nonzero, then the solution at $\bsz\in\Omega^\circ$ involves some Green's functions $G_d^B$ that we define below. We follow the derivation in \cite{sawh:mill:gkio:cran:2023}. They use $\Delta$ to represent the negative semi-definite Laplacian, so their $\Delta u$ is our $-\Delta u$. In order to use their presentation we replace our source function $g$ by $-g$. The equation (11) of \cite{sawh:mill:gkio:cran:2023} gives us
\begin{align*}
u(\bsz_0) = \frac1{\vol(S(\bsz_0,r_1))}\int_{S(\bsz_0,r_1)}u(\bsz)\rd\bsz + \int_{B(\bsz_0,r_1)}G_d^{B(\bsz_0,r_1)}(\bsz_0,\bsz)(-g)(\bsz)\rd\bsz.
\end{align*}
That justifies the estimate
\begin{align}\label{eq:woswithsource2}
\hat u(\bsz_0)
=
h(\bar{\bsz}_{\tau})
-\sum_{k=1}^{\tau}
\vol(B(\bsz_{k-1},r_k))\,
G_d^{B(\bsz_{k-1},r_k)}(\bsz_{k-1},\bsw_k)\, g(\bsw_k)
\end{align}
that we average over $n$ independent replicates using independent vectors $\bsw_k\sim \runif(B(\bsz_{k-1},r_k))$ and $\bsz_{k}\sim \runif(S(\bsz_{k-1},r_k))$ each time.

Our examples have $d\in\{2,3\}$ and for $\bsw\in B(\bsz,r)$, Appendix A.1 of \cite{sawh:mill:gkio:cran:2023} gives 
$$
G_2^{B(\bsz,r)}(\bsz,\bsw)=
\frac{\log(r\Vert\bsw-\bsz\Vert^{-1})}{2\pi}
\quad\text{and}\quad
G_3^{B(\bsz,r)}(\bsz,\bsw)=
\frac1{4\pi}\Bigl(\frac1{\Vert\bsw-\bsz\Vert}-\frac1r\Bigr).
$$
Our $r$ is their $R$ and their $r$ is our $\Vert \bsw-\bsz\Vert$.

An interesting special case arises when the source term $g$ takes a constant value $\nu$. Then instead of sampling $\bsw_k$, 
we may use
\begin{align}\label{eq:wosconstsource}
\hat u(\bsz_0)
&=h(\bar{\bsz}_{\tau})
-\nu\sum_{k=1}^{\tau}
\int_{B(\bsz_{k-1},r_k)}
G_d^{B(\bsz_{k-1},r_k)}(\bsz_{k-1},\bsw)\rd\bsw\notag\\
&=h(\bar{\bsz}_{\tau})
-\frac{\nu}{2d}\sum_{k=1}^{\tau}
r_k^2
\end{align}
for $d=2,3$ by equations (27) and (28) of \cite{sawh:mill:gkio:cran:2023}, with $\bsz_{k}\sim \runif(S(\bsz_{k-1},r_k))$. This simplifies further when $u$ vanishes on $\partial\Omega$ which we see for a dumbbell-shaped domain in Section~\ref{sec:dumbbell}.

\subsection{Randomized Quasi-Monte Carlo}\label{sec:rqmc}

We assume familiarity with the basic notions of QMC and RQMC. This background can be found in \cite{nied:1992}, \cite{dick:pill:2010} or \cite{practicalqmc}.  

We use RQMC points $\bsx_1,\dots,\bsx_n\in[0,1]^d$ to approximate $\mu = \int_{[0,1]^d}f(\bsx)\rd\bsx$ by $(1/n)\sum_{i=1}^nf(\bsx_i)$. The function $f$ includes the transformation of uniform random variables to the WoS trajectories and sample points $\bsw$ as well as the evaluations of $h$ and $g$. The RQMC points individually satisfy $\bsx_i\sim\runif[0,1]^d$ and that gives $\e(\hat\mu)=\mu$. They are more evenly distributed through $[0,1]^d$ than IID Monte Carlo (MC) points would be as quantified by discrepancy measures, which is a reason to believe that they will provide more accuracy than MC points do.

We include several RQMC algorithms from QMCPy \citep{QMCPy}. These are scrambled Sobol' points, scrambled Niederreiter points, scrambled Halton points and randomly shifted lattice rules.  The Sobol' points use the direction numbers from \cite{joe:kuo:2008} and are given the matrix scramble and digital shift from \cite{mato:1998:2}.  The Niederreiter points and the Halton points are similarly given those scrambles.   The default lattice rule in QMCPy at the time of our computation is lattice-33002-1024-1048576.9125 from Frances Kuo's site:
\url{https://web.maths.unsw.edu.au/~fkuo/lattice/}. That lattice was designed for sample sizes $n=2^m$ for $10\le m\le 20$.  We got good results for smaller values of $n$ but anybody getting poor results for $n<2^{10}$ should not view that as a weakness of the lattice.

The scrambled Sobol', Niederreiter and Halton points all satisfy $\var(\hat\mu) = o(1/n)$ as $n\to\infty$ while even an adversarially chosen integrand in $L_2$ yields $\var(\hat\mu)\le\Gamma_d\sigma^2/n$ for finite $n$ and some gain coefficient $\Gamma_d<\infty$ where $\sigma^2/n$ is the variance we would have had using Monte Carlo (MC) instead of QMC. The value of $\Gamma_d$ grows only logarithmically in $d$ for the scrambled Halton points \citep{owen:pan:2024} while it may grow exponentially in $d$ for scrambled nets.
There is also a strong law of large numbers for scrambled nets such as those of Sobol' or Niederreiter, when $f\in L_{1+\varepsilon}$ \citep{sllnrqmc}.

We will show numerical results for all these methods. The RQMC rules typically all outperform MC and have very nearly the same performance as each other. There was one example where the Niederreiter points performed notably worse than the other methods.

While lattice rules do not have a bounded gain coefficient $\Gamma$, 
we took a special interest in them because WoS integrands are periodic and lattice rules are especially well suited to periodic integrands \citep{sloa:joe}. The best results for lattice rules also depend on smoothness, but the WoS integrands are not smooth.

We will need some results from \cite{he:wang:2015} on the accuracy of RQMC estimation for integrands $f(\bsx) =\bsone_\Theta(\bsx)$  and the extensions in \cite{liu:2025} to $\bsone_\Theta(\bsx)g(\bsx)$ where $\Theta\subset[0,1]^k$ is measurable and $g$ satisfies a boundary condition. Note that this $g$ is distinct from our source function.
The function $\bsone_{\Theta}$
is typically not in BVHK \citep{variation}. It is of course in $L_2$ and so RQMC has variance $o(1/n)$ for it but we will quote some sharper rates.  Those results use the notion of Minkowski content which we present in Section~\ref{sec:axes} along with other geometric definitions.

Theorem 3.5 of \cite{he:wang:2015} shows that if $g$ is in BVHK and $\partial\Theta$ has finite $k-1$ dimensional Minkowski content, then scrambled net integration of $\bsone_\Theta g$ has a mean squared error of 
$O(n^{-1-1/(2k-1)}\log(n)^{2(k-1)/(2k-1)})$. Theorem 4.4 of that same paper shows that the scrambled net integration error of $\bsone_\Theta$ in that case is $O(n^{-1-1/k})$. Their upper bounds include leading constants. \cite{liu:2025} shows that the rate is $O(n^{-1-1/k})$ when 
$$\biggl|\frac{ \partial^{|u|}g(\bsz)}{\prod_{j\in u}\partial z_j}\biggr|
\le \prod_{j=1}^k\min(z_j,1-z_j)^{-A_j-1_{\{j\in u\}}}
$$
holds for all non-empty $u\subseteq\{1,2,\dots,k\}$ and $\max_jA_j<1/2$. In particular this holds when all the above partial derivatives of $g$ are bounded.

\subsection{The WoS integrand in RQMC}\label{sec:wosinrqmc}

Each step of the WoS algorithm consumes some number $s\ge1$ of uniform random variables.  Let $\psi_0:[0,1)^{s_0}\to S(\bszero,1)$ with $\psi_0(\bsz)\sim\runif(S(\bszero,1))$ when $\bsz\sim\runif([0,1)^{s_0})$ and $\psi_1:[0,1)^{s_1}\to B(\bszero,1)$ with $\psi_1(\bsz)\sim\runif(B(\bszero,1))$ when $\bsz\sim\runif([0,1)^{s_1})$. For harmonic $u$, we can then use $s=s_0$ uniform variables to take one step of WoS.  The first step is
$\bsz_1 \gets \bsz_0 + r_1\psi_0(\bsx)$ for $\bsx\in[0,1)^{s_0}$ and to allow for $K$ steps requires $Ks_0$ uniform variables. We then use an RQMC point set with $\bsx_i\in[0,1)^{Ks_0}$ for $i=1,\dots,n$.
When the source $g$ is nonzero, we use $s=s_0+s_1$ uniform variables to take the step and also sample $\bsw_k$.  It then requires RQMC points $\bsx_i\in[0,1)^{K(s_0+s_1)}$ to generate the WoS estimate.  The RQMC point sets we use are all high dimensional and support very large $K$ and the WoS convergence is fast enough that most samples have small $\tau$.

There are standard mappings for $\psi_0$ and $\psi_1$. The ones in \cite{fang:wang:1994} have $s_0=d-1$ and $s_1=d$.  Our examples in this paper have $d=2$ or $3$.

For $d=2$, we always take $\psi_0(x) = \theta(x) := (\cos(2\pi x),\sin(2\pi x))\sim\runif(S_2(\bszero,1))$ for $x\sim\runif[0,1)$. 
We use $\psi_1(\bsx)=(\sqrt{x_1}\cos(2\pi x_2),\sqrt{x_1}\sin(2\pi x_2))$ for $\bsx\sim\runif([0,1)^2)$ to sample $\runif(B_2(\bszero,1))$.

For $d=3$, we sample $\bsx\sim\runif([0,1)^2)$, define a latitude $\lambda(\bsx)=2x_1-1$, a longitude $\theta(\bsx)= 2\pi x_2$ and use the hat box transformation
\begin{align}\label{eq:hatbox}
\psi_0(\bsx)=\bigl(\sqrt{1-\lambda^2(\bsx)}\cos(\theta(\bsx)),\,\sqrt{1-\lambda^2(\bsx)}\sin(\theta(\bsx)),\,\lambda(\bsx)\bigr)
\end{align}
to get $\psi_0(\bsx)\sim\runif(S_3(\bszero,1))$. None of our examples with $d=3$ had a nonzero source function, but for that case we would use $\bsw=\psi_1(\bsx) = x_1^{1/3}\psi_0((x_2,x_3))$ for $x\sim \runif([0,1)^3)$.

The case of a harmonic function on $\Omega\subset\real^2$ is especially interesting as $\bsz_k$ is now a periodic function of the point $\bsx\in[0,1)^k$ from which the first $k$ WoS steps are generated. As mentioned earlier, periodic functions are especially favorable for lattice sampling.

Algorithm \ref{alg:wos} has pseudo-code to compute the WoS estimate of $u(\bsz_0)$ by RQMC when the source term $g=0$.  For a nonzero source term, the algorithm should be extended to include a second mapping $\psi_1$ to generate the point $\bsw_k$ at step $k$ from $s_1$ uniform inputs, and the RQMC points should have dimension $K(s_0+s_1)$, with $K$ being the maximum number of steps the WoS can take per trajectory.

\begin{algorithm}[t]
\caption{Walk on Spheres with RQMC\label{alg:wos} for solving a Dirichlet BVP}
\begin{algorithmic}[1]
\INPUT
\begin{tabular}[t]{@{}l l@{}}
- & Initial point $\bsz_0 \in \Omega\subset\real^d$ \\
- &termination parameter $\varepsilon > 0$, maximum \# steps $K$\\
- & RQMC points $\bsx_1,\dots,\bsx_n\in[0,1)^{sK}$\\
- & A mapping $\psi_0$ such that $\psi_0(\bsu)\sim \runif(S_d(\bszero,1))$ for $\bsu\sim \runif([0,1)^s)$
\end{tabular}
\newline
\OUTPUT Estimator $\hat{u}_n(\bsz_0)$ of $u(\bsz_0)$.
\newline
\STATE Initialize $\hat\mu \leftarrow 0$
\FOR{$i=1,\dots,n$}
    \STATE $\bsz \leftarrow \bsz_0$
    \FOR{$k=1,\dots,K$}
        \STATE $r \leftarrow \dist(\bsz,\partial\Omega)$
        \IF{$r < \varepsilon$}
            \STATE Compute the projection $\bar{\bsz}$ of $\bsz$ onto $\partial\Omega$
            \STATE $Y_i \leftarrow h(\bar{\bsz})$
            \STATE \textbf{break}
        \ENDIF
        \STATE Extract the block $\bsu_{i,k}=\bsx_{i,s(k-1)+1{:}sk} \in [0,1)^s$ from $\bsx_i$
        \STATE $\bsz \leftarrow \bsz + r\psi_0(\bsu_{i,k})$
    \ENDFOR
    \IF{$k=K$}
        \STATE Compute the projection $\bar{\bsz}$ of $\bsz$ onto $\partial\Omega$
        \STATE $Y_i \leftarrow h(\bar{\bsz})$
    \ENDIF
    \STATE $\hat\mu \leftarrow \hat\mu + Y_i$
\ENDFOR
\RETURN $\hat{u}_n(\bsz_0) \leftarrow \hat\mu/n$
\end{algorithmic}
\end{algorithm}

\section{The gasket example}\label{sec:gasket}

We illustrate RQMC-WoS with an example from \cite{wosoneweekend}. The authors consider the thermal conduction in a \textit{cylinder head gasket} illustrated in Figure~\ref{fig:gasket}. In this problem $u(\bsz)$ is the temperature of the gasket at $\bsz\in\Omega$. The purpose of the head gasket is to avoid any leakage of oil or coolant from the combustion engine into the cylinders where combustion occurs. Leaks are more likely to occur where the gasket is hotter. 

They are interested in approximating the solution to the  BVP with $\Delta u(\bsz)=0$ for $\bsz\in\Omega^\circ$
and boundary values
\begin{equation*}
u(\bsz) = \sum_{r\in\{\mathrm{coolant},\,\mathrm{outer},\,\mathrm{oil},\,\mathrm{oil\ return},\,\mathrm{bore}\}}
T_r\,\bsone_r(\bsz) \quad \textrm{ for } \bsz\in\partial\Omega
\end{equation*}
where $T_r$ is a constant temperature (in degrees Celsius) specific to boundary component $r$ and $\bsone_r(\bsz)$ is the indicator of component $r$.
We computed the WoS estimator of $u$ at point $\bsz_0 =(0.240999,0.3)$ located above the center of the third borehole and roughly half way to the edge of the gasket as indicated in Figure~\ref{fig:gasket}. This point is sufficiently interior to avoid immediate absorption, yet close enough to several boundary components that typical WoS trajectories require multiple steps and the exit location is nontrivial. 
For each sample size, we consider Monte Carlo sampling and the four RQMC algorithms described in Section~\ref{sec:rqmc}, and run $100$ independent replicates of each method. 
The variance curves are shown in Figure~\ref{fig:gasket-results}. 
The reference curve was fit by a regression of log variance on $\log(n)$, pooling data from all four RQMC methods for $n\ge 2^7$. We use that definition for all the reference curves in this paper. The regression coefficients for individual RQMC methods are given in Table~\ref{tab:rqmc_slopes_intercepts} of Section~\ref{sec:summary} for five numerical examples.

\begin{figure}
    \centering
    \includegraphics[width=1\linewidth]{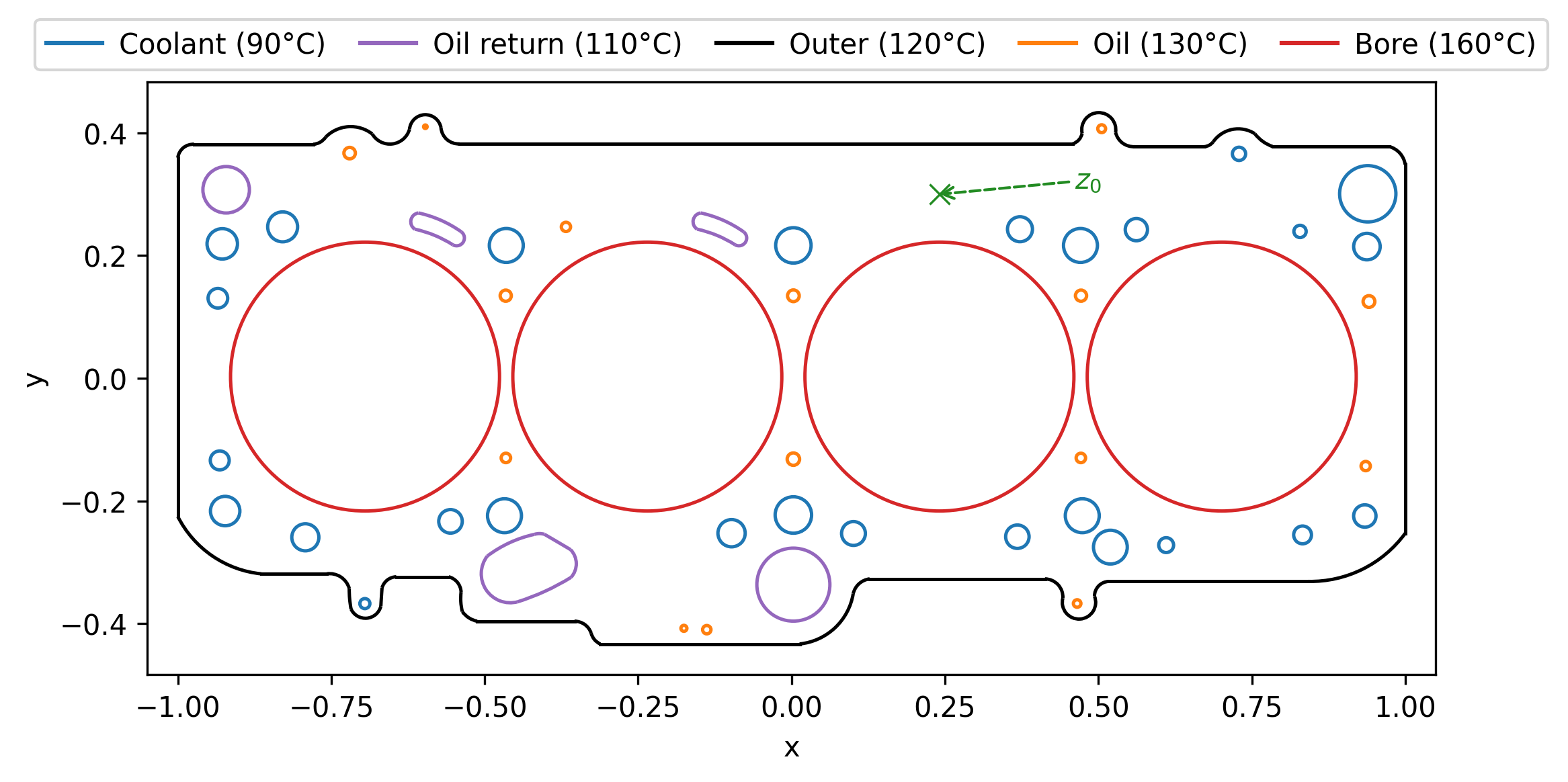}
    \caption{Cylinder head gasket domain. The domain $\Omega$ is the interior of the outer curve minus some shapes for coolant, oil return, oil and a bore.}
    \label{fig:gasket}
\end{figure}

\begin{figure}[t]
\centering\includegraphics[width=.9\hsize]{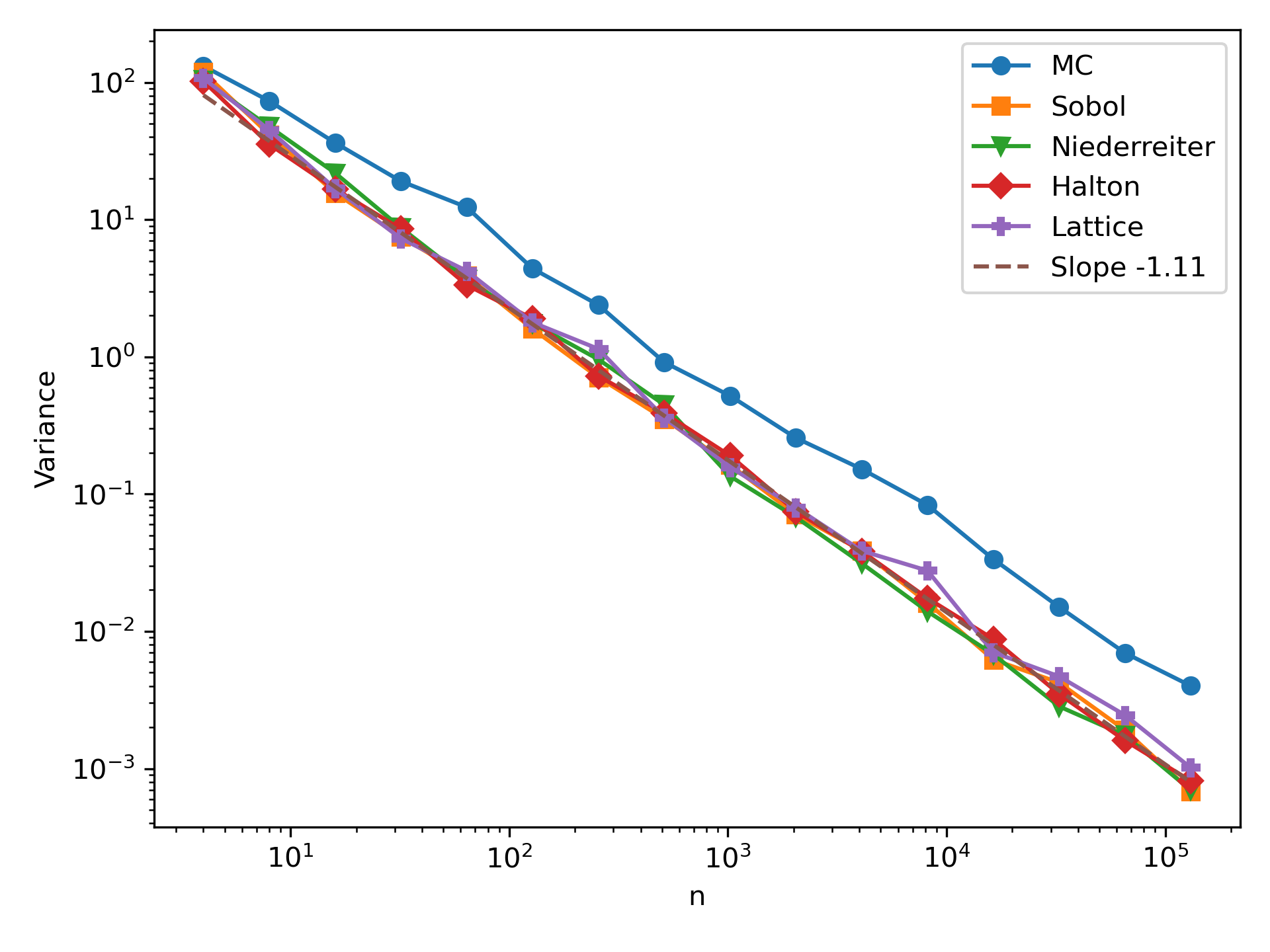}
    \caption{Variance of the standard WoS estimator at $\bsz_0=(0.240999, 0.3)$ for the gasket domain with $\varepsilon=10^{-3}$. 
    }
    \label{fig:gasket-results}
\end{figure}

In this example we see that using RQMC points brings a better convergence rate for the variance, with a slope near $-1.11$ in a log-log plot.  As mentioned earlier, the different RQMC samplers have very nearly equal performance. Here, for $n=2^{17}$, the RQMC points bring a roughly $5$-fold variance reduction, compared to standard MC. 
The performance of RQMC varies spatially in this domain.  We describe some other results in Section~\ref{sec:moreexamples}.

\section{Geometric quantities}\label{sec:axes}

It seems intuitively clear that the sets of RQMC points
that lead to the outcomes of interest in WoS problems
should be quite regular and not, for instance, of a fractal
nature. However, finding sufficient conditions to justify
this belief runs into some complications.
Here, we present some background results
on Hausdorff measure, Minkowski content, rectifiability of curves,
domains in $\real^2$ and analyticity of distances to plane curves.

\subsection{Hausdorff measure and Minkowski content}

The way we will control Minkowski content is via Theorem 3.2.39 of \cite{fede:1969} quoted in Section~\ref{sec:rectifiability} which gives conditions that make it equal to Hausdorff measure. Then we will apply theorems that control Hausdorff measure.

We use the definition of Hausdorff measure from \cite{evan:gari:2015} because it is more easily stated than some others.  An arbitrary set $C\subset\real^k$ has diameter $\diam(C)=\sup_{\bsx,\tilde\bsx\in C}\Vert\bsx-\tilde\bsx\Vert$, with $\diam(\emptyset)=0$ by convention. Then for $\Theta\subset\real^k$, $0\le m<\infty$ and $0<\delta\le\infty$ we let
$$
\ch_{m,\delta}(\Theta) = \frac{\pi^{m/2}}{\Gamma(m/2+1)}\times
\inf\Biggl\{\sum_{\ell=1}^\infty  \Bigl(\frac{\diam(C_\ell)}2\Bigr)^m
\biggm| \Theta\subset\bigcup_{\ell=1}^\infty C_\ell, \diam(C_\ell)\le\delta
\Biggr\}
$$
where $\Gamma(\cdot)$ is the Gamma function.
When $m$ is an integer the leading factor above is $\vol(B_m(\bszero,1))$.
The $m$-dimensional Hausdorff measure of $\Theta$ is
\begin{align}\label{eq:defhaus}
\ch_m(\Theta) = \lim_{\delta\to0}\ch_{m,\delta}(\Theta)
=\sup_{\delta>0}\ch_{m,\delta}(\Theta).
\end{align}
We only need $\ch_m$ for integer $m$. For $m=0,1,2,3$, we get a measure that matches the usual notions of cardinality, length, area and volume. The thickness $\alpha$ in \cite{bind:brav:2012} is defined using a closely related quantity called Hausdorff content.

We follow Chapter 3.3 of \cite{kran:park:1999} to define Minkowski content.
For $m\in\{0,1,\dots,k\}$,
the upper $m$-dimensional Minkowski content of non-empty $\Theta\subset\real^k$ is
$$
\mup_{m}(\Theta) = \limsup_{\varepsilon\to0^+} 
\frac{\vol(\{\bsx\mid \dist(\bsx,\Theta)<\varepsilon\})}{\vol(B_{k-m}(\bszero,\varepsilon))}.
$$
The corresponding lower $m$-dimensional Minkowski content is
$$
\mlo_{m}(\Theta) 
= \liminf_{\varepsilon\to0^+} 
\frac{\vol(\{\bsx\mid \dist(\bsx,\Theta)<\varepsilon\})}{\vol(B_{k-m}(\bszero,\varepsilon))}.
$$
When $\mlo_m(\Theta)=\mup_m(\Theta)$, we let $\bm_m(\Theta)$
denote their common value.  If $\bm_m(\Theta)<\infty$, then we say that $\Theta$ has $m$ dimensional Minkowski content.

The way Minkowski content is used in RQMC is to partition
the set $[0,1)^k$ into $N$ sets $T_1,\dots,T_N$.  We write our integrand as 
$ f(\bsx) = 1\{\bsx\in\Theta\} = \sum_{j=1}^Nf_j(\bsx)$
where $f_j(\bsx) = 1\{\bsx\in \Theta\cap T_j\}$. Then our estimate of $\vol(\Theta)$ is
$$
\wh\vol(\Theta) = \frac1n \sum_{i=1}^n\sum_{j=1}^Nf_j(\bsx_i)
$$
for $n$ sample points $\bsx_i\in[0,1)^k$.

For scrambled $(t,m,k)$-nets in base $b$ it is convenient to take
$T_j$ to be elementary intervals of volume $b^{t-m}$ with side lengths
as nearly equal as possible.
For the commonly used Sobol' points with $b=2$, these are dyadic hyperrectangles.
Then $(1/n)\sum_{i=1}^nf_j(\bsx_i) = \int_{[0,1)^k}f_j(\bsx)$ if
either $T_j\subset\Theta$ or $T_j\subset\Theta^c$.
Let
 $$\cb=\bigl\{ j\in\{1,2,\dots,N\}\mid T_j\cap \Theta\ne\emptyset\ \&\ T_j\cap \Theta^c\ne\emptyset\bigr\}$$
be the set of `boundary' elementary intervals.
Then
$\wh\vol(\Theta) -\vol(\Theta)$
is the error in estimating the integral of
$$
f_\bdy(\bsx)=\sum_{j\in\cb}f_j(\bsx).
$$
The RQMC variance is no more than some gain factor $\Gamma<\infty$ times
the Monte Carlo variance. For $n\ge2$, that Monte Carlo variance is no more than $|\cb|/n^2$
because $f_j(\bsx)\in\{0,1\}$ takes the value $1$ with probability at most $1/n$
giving it variance at most $1/n$. Then the RQMC variance is at most $\Gamma|\cb|/n^2$. 
If $\mupkmo(\partial\Theta)<\infty$, then $|\cb| = O(N^{(k-1)/k})$
and for digital nets $N = n/b^t$.  Then the RQMC variance is $O(n^{-2+(k-1)/k})=O(n^{-1-1/k})$.  

The RQMC argument only needs a finite upper Minkowski content for $\partial\Theta$.
Because the RQMC points
are uniformly distributed we could reduce the boundary set to 
\begin{align*}
\tilde\cb=\bigl\{ j\in\{1,2,\dots,N\}\mid 
0<\vol(T_j\cap\Theta)<\vol(T_j)\bigr\}.
\end{align*}
We are not aware of any proofs in the literature
that use $\tilde\cb$ instead of $\cb$. 

\subsection{Rectifiability from Federer (1969)}\label{sec:rectifiability}

Here are the definitions from Section 3.2.14 of \cite{fede:1969}. 
He references a measure $\phi$ that is commonly taken to be $m$ dimensional Hausdorff measure. 
In each definition $m$ is a positive integer and $E$ is a subset of 
a metric space that we take to be $\real^d$ for some $d\ge1$. For $\phi$ to measure $\real^d$ in Federer's sense, it must be a countably subadditive function on all subsets of $\real^d$, what we now call an outer measure. 

\begin{definition}\label{defn:rect}
\leavevmode 
\begin{compactenum}[\quad(1)]
\item $E$ is $m$ rectifiable if and only if 
there is a Lipschitz function mapping a bounded subset of $\real^m$
onto $E$. 
\item $E$ is countably $m$ rectifiable if and only if 
$E$ equals the union of some countable family 
of $m$ rectifiable sets. 
\item $E$ is countably $(\phi,m)$ rectifiable if and only if 
$\phi$ measures $\real^d$ and 
there is a countably $m$ rectifiable set containing $\phi$
almost all of  $E$. 
\item $E$ is $(\phi,m)$ rectifiable if and only if $E$ is countably 
$(\phi,m)$ rectifiable and $\phi(E)<\infty$. 
\end{compactenum}
\end{definition}

We will need to make a somewhat subtle use of Definition~\ref{defn:rect}
part (1). The bounded subset of $\real^m$ that we use might
have strictly lower dimension than $m$.
We will use the fact that any subset $S$ of an $m$ rectifiable 
set $E$  is also $m$ rectifiable, even for a set $S$ with a
fractal appearance.  This holds because we can write $S = f(f^{-1}(S)\cap B)$
where $B$ is bounded and $E=f(B)$.
We need the following elementary result on finite unions of rectifiable sets.  
We suspect this is well known, but we could not find a reference.

\begin{lemma}\label{lem:finiterect}
If $E$ is a finite union of $m$ rectifiable sets, then $E$ is $m$ rectifiable.
\end{lemma}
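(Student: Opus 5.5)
By induction on the number of sets, it suffices to treat the union of two sets. Let $E=E_1\cup E_2$ where $E_j=f_j(A_j)$ with $A_j\subset\real^m$ bounded and $f_j:A_j\to\real^d$ Lipschitz with constant $L_j$. If either set is empty the claim is trivial, so assume both are nonempty. The plan is to place disjoint translated copies of $A_1$ and $A_2$ far apart in $\real^m$ and glue $f_1$ and $f_2$ into one map on the union. The only issue is keeping the glued map Lipschitz across the two pieces.

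Choose $R$ with $A_1\cup A_2\subset B_m(\bszero,R)$. Let $\bse_1$ be the first coordinate vector, take a separation $D>2R$, and set $A=A_1\cup(A_2+D\bse_1)$. Then $A$ is bounded and the two pieces are at distance at least $D-2R>0$ from each other. Define $f(\bsx)=f_1(\bsx)$ for $\bsx\in A_1$ and $f(\bsx)=f_2(\bsx-D\bse_1)$ for $\bsx\in A_2+D\bse_1$. This is well defined because the pieces are disjoint, and $f(A)=E_1\cup E_2=E$.

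The only step needing care is the Lipschitz bound for pairs $\bsx\in A_1$, $\bsy\in A_2+D\bse_1$. Here $\Vert f(\bsx)-f(\bsy)\Vert\le\diam(E_1)+\diam(E_2)+\dist_0$, where $\dist_0$ is the distance between fixed base points $f_1(\bsa_1)$ and $f_2(\bsa_2)$. This quantity is finite because each $E_j$, as a Lipschitz image of a bounded set, has diameter at most $L_j\diam(A_j)<\infty$. Since $\Vert\bsx-\bsy\Vert\ge D-2R>0$, the ratio $\Vert f(\bsx)-f(\bsy)\Vert/\Vert\bsx-\bsy\Vert$ is bounded by a constant $M$. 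Hence $f$ is Lipschitz with constant $\max(L_1,L_2,M)$.

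Within each piece the Lipschitz bound is inherited from $f_1$ or $f_2$, since translation is an isometry. Thus $E$ is $m$ rectifiable in the sense of Definition~\ref{defn:rect}(1). No extension theorem such as Kirszbraun's is needed because the domain is not required to be connected or open.
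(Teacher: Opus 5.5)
Your proof is correct and takes essentially the same route as the paper: translate the bounded domains far apart, glue the Lipschitz maps, and bound the cross-piece ratio by the finite diameter of the images divided by a positive separation. The differences are cosmetic. You reduce to two sets by induction and explicitly justify that the images have finite diameter, whereas the paper treats all $n$ sets at once with separation at least $1$ and Lipschitz constant $\max(L,M)$, where $M$ is the diameter of $E$.
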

\begin{proof}
Let $E=\cup_{i=1}^nE_i$ where $E_i$ is the image of the bounded set $B_i\subset\real^m$
under $f_i$ which has Lipschitz constant $L_i$. Let $L=\max_{1\le i\le n}L_i$
and $M =\sup_{\bsy,\tilde\bsy \in E}\Vert \bsy-\tilde\bsy\Vert$.
We can choose points $\bst_i\in\real^m$ so that the 
translated sets $\tilde B_i = \{\bsx + \bst_i\in \real^m\mid \bsx\in B_i\}$
have $\Vert\bsx-\tilde\bsx\Vert \ge1$ 
whenever $\bsx$ and $\tilde\bsx$ are in different translated sets.
Let $B =\cup_{i=1}^n\tilde B_i$
and define the function $f$ on $B$ by $f(\bsx) = f_i(\bsx-\bst_i)$ whenever $\bsx\in \tilde B_i$.
Now $f$ is Lipschitz with constant at most $\max(L,M)$.
Then $B$ is bounded and $E$ is the image of $B$ under $f$.
\end{proof}

The conclusion to Lemma~\ref{lem:finiterect} does not hold for countable unions.
The next theorem is one of the few ways to get a conclusion about finite
Minkowski content.

\begin{theorem}\label{thm:rect2mink}
If  $W$ is a closed $m$ rectifiable subset of $\real^n$, 
then $\bm_m(W)=\ch_m(W)$. 
\end{theorem}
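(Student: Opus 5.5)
This is Theorem 3.2.39 of \cite{fede:1969}, and the paper quotes it rather than proving it. If I were to prove it myself, I would follow Federer's route. There are two inequalities to establish: the lower Minkowski content is at least $\ch_m(W)$, and the upper Minkowski content is at most $\ch_m(W)$. Throughout I would use the normalization $\vol(\{\dist(\cdot,W)<\varepsilon\})/\vol(B_{n-m}(\bszero,\varepsilon))$.

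\textbf{Step 1: reduce to a nice parametrization.} Write $W=f(A)$ with $A\subset\real^m$ bounded and $f$ Lipschitz. Extend $f$ to all of $\real^m$ with the same Lipschitz constant using Kirszbraun's theorem. Since $W$ is closed and bounded, it is compact. This lets me replace $A$ by its closure $\bar A$ and use $f(\bar A)\subset W$. By Rademacher's theorem, $f$ is differentiable almost everywhere. By the area formula, $\ch_m(W)=\int_{\real^m}\#\{\bsx\in\bar A: f(\bsx)=\bsy\}\rd\ch_m(\bsy)$, and this is at most $\mathrm{Lip}(f)^m\vol(\bar A)<\infty$.

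\textbf{Step 2: lower bound.} Using the structure theory of $m$ rectifiable sets, $\ch_m$-almost all of $W$ can be covered, up to $\ch_m$-measure $\eta$, by finitely many disjoint compact pieces $K_j$. Each $K_j$ is bi-Lipschitz (with constant close to $1$) to a compact subset of an $m$-plane. For small $\varepsilon$, the $\varepsilon$-tubes around distinct $K_j$ are essentially disjoint. For each piece, the tube contains approximately $\ch_m(K_j)\cdot\vol(B_{n-m}(\bszero,\varepsilon))$ volume, because normal $(n-m)$-discs of radius $\varepsilon$ over density points are nearly disjoint. Summing over pieces and letting $\eta\to0$ gives $\mlo_m(W)\ge\ch_m(W)$. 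This step needs only countable $(\ch_m,m)$ rectifiability.

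\textbf{Step 3: upper bound.} This is the main obstacle. The tube of a general set can be much larger than $\ch_m$ suggests, for example around the countable set $\{0\}\cup\{1/j\}$ when $m=0$, or with $m=1$ around a dense countable union of segments. The full rectifiability hypothesis, a single Lipschitz image of a bounded set, is what prevents this. My plan has three parts.

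\begin{compactenum}[(a)]
\item Take the good part $G=\bigcup_j K_j$ from Step 2. Its tube volume is asymptotically $\ch_m(G)\vol(B_{n-m}(\bszero,\varepsilon))$ by the local flatness estimate.
\item For the remainder $R=W\setminus G^\circ$-neighbourhood, I need $\limsup_\varepsilon \vol(R_\varepsilon)/\varepsilon^{n-m}\le C\,\ch_m$-type quantity that is small. Here I use the parametrization: $R\subset f(\bar A\setminus U)$ where $U$ is open with $\vol(\bar A\setminus U)$ small. Cover $\bar A\setminus U$ by dyadic cubes of side $\varepsilon$. The number of such cubes is at most $C\,\vol((\bar A\setminus U)_{\sqrt m\varepsilon})\varepsilon^{-m}$. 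Each cube maps into a ball of radius $\mathrm{Lip}(f)\sqrt m\,\varepsilon$, so $\vol(R_\varepsilon)\le C'\varepsilon^{n-m}\vol((\bar A\setminus U)_{\sqrt m\varepsilon})$. Because $\bar A\setminus U$ is compact, its $\varepsilon$-neighbourhood volume tends to its measure. I would choose $U$ so that this is at most $\eta$.
\item Combine (a) and (b) to get $\mup_m(W)\le\ch_m(W)+C\eta$, then let $\eta\to0$.
\end{compactenum}

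The delicate point is choosing $U$ and $G$ compatibly, so that $f$ restricted to $U$ corresponds to the nearly flat pieces. I would use Lusin/Egorov-type approximations of $Df$ on $\bar A$ to arrange this. Closedness of $W$ is used to ensure the limit of tube volumes sees no extra points beyond $W$.
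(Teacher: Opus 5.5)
Your first sentence is the paper's whole proof: the paper only cites Theorem 3.2.39 of \cite{fede:1969}, so for the paper's purposes the citation is enough. The self-contained sketch you add, however, does not go through as written.

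\textbf{The decomposition in Step 3(b) does not exist in general.} You need an open $U$ with $\vol(\bar A\setminus U)$ small and $f(\bar A\cap U)\subset G$, where $G$ is a finite union of nearly flat compact pieces. That forces $f^{-1}(G)$ to contain a relatively open subset of $\bar A$ of nearly full measure. Lusin/Egorov arguments do not give this; they give compact good sets, which are typically nowhere dense in $\bar A$.

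For example, take $m=1$, $n=2$ and $f(x)=(x,g(x))$. Let $g=0$ on a fat Cantor set $K\subset[0,1]$, and put a tiny $45^\circ$ tent in each gap of $K$. Every open interval meeting $K$ contains infinitely many tents, and finitely many nearly flat pieces cannot contain them all. So $U\cap K=\emptyset$ and $\vol(\bar A\setminus U)\ge\vol(K)$, even though $f$ has no critical points.

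Nor can you take the bad set to be $\bar A\setminus C$ for a compact good set $C$. That set is only relatively open, and $\vol((\bar A\setminus C)_{\sqrt m\varepsilon})$ tends to the measure of its closure, which can be large.

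A repair is to separate scales:
\begin{compactenum}[\quad(1)]
\item Parameters whose images lie within $\delta\varepsilon$ of $f(C)$ only enlarge the tube of $f(C)$ from radius $\varepsilon$ to $(1+\delta)\varepsilon$.
\item The remaining parameters lie in grid cubes of side $s\asymp\delta\varepsilon/\mathrm{Lip}(f)$ that miss the closed set $f^{-1}(f(C))$. This bounds their contribution by $O(\delta^{-m}\vol(\bar A\setminus f^{-1}(f(C))))$.
\item Choose $C$ after $\delta$ so that this is $O(\delta)$.
\end{compactenum}
Working with $f^{-1}(f(C))$ rather than $C$ also absorbs multiplicity. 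Relatedly, your Step 1 should read $\ch_m(W)\le\int N(f|\bar A,\bsy)\rd\ch_m(\bsy)$, an inequality, not an equality.

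\textbf{Nothing in your plan controls the critical set.} Let $Z=\{\bsx\in\bar A\mid J_mf(\bsx)=0\}$. A Lipschitz parametrization can stall, so $Z$ may carry much of the measure of $\bar A$. Its image is $\ch_m$-null, so the structure theory behind your pieces $K_j$ says nothing about it. Meanwhile your $\varepsilon$-cube count charges it $O(\mathrm{Lip}(f)^m\vol(Z))$, which does not vanish. The good set $C$ must therefore contain most of $Z$, and you need $\mup_m(f(C\cap Z))=O(\delta)$.

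That is a Sard-type statement for Minkowski content, and it needs cubes much larger than $\varepsilon$:
\begin{compactenum}[\quad(1)]
\item Take $C\cap Z$ compact, with $f$ uniformly differentiable on it at scales below some $r_0$; Rademacher plus Egorov gives this.
\item Cover $C\cap Z$ by cubes of side $\rho=\varepsilon/\delta$.
\item The image of each cube lies within $\sqrt m\,\varepsilon$ of a disc of dimension at most $m-1$ and radius $O(\rho)$. Its $\varepsilon$-tube therefore has volume $O(\rho^{m-1}\varepsilon^{n-m+1})$.
\item Summing over $O(\rho^{-m})$ cubes gives $O(\delta\,\varepsilon^{n-m})$.
\end{compactenum}

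With this lemma, the scale separation above, and disjoint nearly flat pieces covering the image of $\{J_mf>0\}$, your outline can be completed. As written, though, Step 3 does not establish $\mup_m(W)\le\ch_m(W)$; only the lower bound of Step 2 is sound in outline.
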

\begin{proof}
This is Theorem 3.2.39 of \cite{fede:1969}. 
\end{proof}

Federer's definition of $m$ rectifiability is stricter than we see in some
more recent works.  For example, $m$-rectifiability of Definition 15.3 of \cite{matt:1999}
is $(\ch_m,m)$ rectifiability of Definition~\ref{defn:rect}. That definition of rectifiability is easier for us to establish in our WoS problem but it is not strong enough to use in Theorem~\ref{thm:rect2mink}.

\subsection{The domain, its boundary and distance functions}

Our study of the WoS algorithm requires smoothness of
$z\mapsto \dist(\bsz,\partial\Omega)$
almost everywhere in $\Omega^\circ$. 
The required smoothness fails to hold at points $\bsz$ whose projection onto $\partial\Omega$ is not unique. We discuss such medial points below as well as some points that project
onto places where two smooth subcurves of $\partial\Omega$
meet. Under our conditions $\dist(\bsz,\partial\Omega)$ is analytic everywhere except on a finite union of analytic curves.  The results we need to study the smoothness of the function $\dist(\cdot,\partial\Omega)$ are scattered
over different publications with multiple definitions of the necessary concepts.

We use regularity conditions on $\Omega$ taken from \cite{choi:choi:moon:1997}.
First, $\Omega$ is the closure of a connected and bounded open
subset of $\real^2$.
The boundary $\partial\Omega$ is the union of finitely many
disjoint simple closed curves. Those are described as embeddings
of the unit circle into $\real^2$. 
Such a curve is closed, continuous and does not intersect itself.
It does not have to be diffeomorphic to the unit circle like the embeddings
from differential topology \citep{guil:poll:1974}. For example, it can have corners.
When there are $G+1$ of these curves, then the set $\Omega$
has one outer boundary curve and $G$ `holes' cut out of it by inner boundary curves. 
It is then said to have \emph{genus} $G$. The gasket example has $G=50$. Many problems have genus $0$.

Each of the $G+1$ boundary curves is the union of a finite number of 
pieces we call arcs. Each of those arcs is a real analytic curve defined on a closed interval $[a,b]$.
Those in turn are the restrictions of a real analytic curve over an open
interval such as $(a-\varepsilon,b+\varepsilon)$ to $[a,b]$.

\cite{choi:choi:moon:1997} have to exclude the possibility that
$\partial\Omega$ is actually a circle.  That case is easy
for our WoS theory and can also be sampled without WoS
by one dimensional integration of $h$ times a Poisson
kernel over $\partial\Omega$, so this exclusion does not cause
us difficulty.

\begin{definition}\label{def:ccmdomain}
A domain $\Omega$ satisfying the above stated conditions
of \cite{choi:choi:moon:1997} is called a CCM domain.
\end{definition}

Analyticity may seem like an extremely strong assumption, but
even $C^\infty$ curves can show pathologies.
See  Figures 1 and 2 of \cite{choi:choi:moon:1997} where sets with $C^\infty$ boundaries have medial axes consisting of a countably infinite number of curves with infinite total length.  
They also remark that many applications define boundaries
via non-uniform rational B-splines (NURBS) which satisfy their conditions,
along with more commonly considered line segments and circular arcs.

For Dirichlet boundary conditions and a harmonic function $u$,
we will examine the case where $u(\bsz)$ takes the value $0$ on all of $\partial\Omega$ except for one arc $\ca$. Such cases form a basis for more general problems.
We define the distances
\begin{align*}
r_{\partial\Omega}(\bsz) &= \dist(\bsz,\partial\Omega) = \min_{\tilde\bsz\in\partial\Omega}\Vert\bsz-\tilde\bsz\Vert,\quad\text{and}\\
r_\ca(\bsz) &= \dist(\bsz,\ca) = \min_{\tilde\bsz\in\ca}\Vert\bsz-\tilde\bsz\Vert.
\end{align*}
Both of these are minima not just infima because they are distances
to closed sets.  These functions are both $1$-Lipschitz
from the triangle inequality. 
\newline

Here is the definition of a geometric graph
from \cite{choi:choi:moon:1997}.

\begin{definition}\label{def:geograph}
  A set in $\real^2$ is a geometric graph if it is topologically a 
usual connected graph with a finite number of vertices and edges,
where a vertex is a point in $\real^2$
and an edge is a real analytic curve with finite length whose
limits of tangents at the end points exist.
\end{definition}

\begin{definition}\label{def:regularwos}
The WoS problem is regular if $r_{\partial\Omega}$ and $r_{\ca}$
are both real analytic functions on all of $\Omega^\circ$ except for a
subset $S$ of $\Omega^\circ$ 
that is a finite union of geometric graphs.
\end{definition}


Next, we consider smoothness of the distance functions.
For $k\ge2$, the distance to a $C^k$ curve $\mathcal{C}$ is $C^k$ 
over  $U\setminus\mathcal{C}$ where $U$ is a neighborhood
of $\mathcal{C}$ \citep{foot:1984}.  Our regularity condition requires 
greater smoothness and requires it over a larger
set than just locally near our curves.
We use the following definitions taken from Definition 1 of \cite{leob:stei:2021}
to give a result on global smoothness.

\begin{definition}\label{defn:medial}
Let $M\subseteq\real^d$ be nonempty.
The function $r_M:\real^d\to[0,\infty)$
has values $r_M(\bsx) = \dist(\bsx,M)$.
The set
$$\unpp(M) = \{\bsx\in\real^d\mid 
\text{there is a unique $\tilde\bsx\in M$ with $\Vert\bsx-\tilde\bsx\Vert=r_M(\bsx)$}\}$$
has the non-medial points of $M$.
The function  $p_M:\unpp(M)\to M$
is the projection $p_M(\bsx) = \argmin_{\tilde\bsx\in M}\Vert \tilde\bsx-\bsx\Vert$.
The set $\co(M)=(\unpp(M))^\circ$
is the largest open set contained in $\unpp(M)$.
\end{definition}

There can be points in $\unpp(M)\setminus\co(M)$.
These are non-medial points
to which some sequence of medial points converges.
Example 1.2 of \cite{dude:holl:1994} has
$M = (\real\times \{-1,1\})
\setminus \{(0,1)\}$. Then as $n\to\infty$, the medial points $(1/n,0)$ converge to the non-medial point $(0,0)$.

\begin{theorem}\label{thm:itssmooth}
Let $M$ be a $C^k$ submanifold of $\real^d$ with $k\ge1$. Then
the projection $p_M$ is $C^{k-1}$ on $\co(M)$ and 
the distance $r_M$ is $C^k$ on $\co(M)\setminus M$.
If $M$ is analytic then so are $p_M$ and $r_M$.
\end{theorem}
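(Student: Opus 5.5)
This is a known result: it is essentially Theorem 1 of \cite{leob:stei:2021}, which the paper is quoting. The plan below is the standard argument via the normal bundle and the inverse function theorem, so that the statement can be checked rather than taken on faith.

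\textbf{Setup.} Write $\nu M=\{(\bsy,\bsu)\mid \bsy\in M,\ \bsu\perp T_{\bsy}M\}$ for the normal bundle. It is a $C^{k-1}$ manifold of dimension $d$, and it is analytic when $M$ is. Define the endpoint map
$$\Phi(\bsy,\bsu)=\bsy+\bsu,$$
which has the same regularity class. Fix $\bsx\in\co(M)$ and let $\bsy_0=p_M(\bsx)$ and $\bsu_0=\bsx-\bsy_0$. By the first order optimality condition, $\bsu_0$ is normal to $M$ at $\bsy_0$. Locally, $p_M$ will be obtained as the first component of a local inverse of $\Phi$ near $(\bsy_0,\bsu_0)$.

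\textbf{Step 1: invertibility of $d\Phi$.} The differential of $\Phi$ at $(\bsy_0,\bsu_0)$ is $I-t\,L_{\bsu_0/\Vert \bsu_0\Vert}$ restricted to the tangent directions, where $t=\Vert\bsu_0\Vert$ and $L$ is the shape operator. This map is singular exactly when $1/t$ is a principal curvature in direction $\bsu_0$. That is, $\bsx$ would be a focal point of $M$ along the normal ray.

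The main obstacle is to show that focal points do not occur in $\co(M)$. I would argue in two parts.

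1. If $\bsx$ lies beyond the first focal point on the segment from $\bsy_0$, then $\bsy_0$ is not even a local minimizer of the distance. This follows from the second variation $I-tL$ failing to be positive semidefinite.
2. If $\bsx$ is exactly a first focal point, then points slightly farther along the ray $\bsy_0+s\bsu_0/t$, with $s>t$ small, have non-unique or non-local projections. Such points lie arbitrarily close to $\bsx$, which contradicts $\bsx$ being interior to $\unpp(M)$. I expect this limiting case to be the delicate part. A cleaner route is to use uniqueness on a whole neighborhood: continuity of $p_M$ on $\unpp(M)$ (a compactness argument) together with injectivity of $\Phi$ near $(\bsy_0,\bsu_0)$, invoking invariance of domain, forces nondegeneracy.

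For $k=1$ there is no shape operator, so Step 1 is vacuous. The claim is then only continuity of $p_M$, which comes from the compactness argument alone.

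\textbf{Step 2: smoothness of $p_M$.} With $d\Phi$ invertible, the inverse function theorem (or its analytic version) gives a local $C^{k-1}$ (resp. analytic) inverse $\Phi^{-1}$ on a neighborhood of $\bsx$. Continuity of $p_M$ ensures the local inverse agrees with $(p_M,\,\mathrm{id}-p_M)$. Hence $p_M$ is $C^{k-1}$, or analytic, on $\co(M)$.

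\textbf{Step 3: smoothness of $r_M$.} On $\co(M)\setminus M$, the map $r_M^2$ is differentiable with
$$\nabla (r_M^2)=2(\bsx-p_M(\bsx)).$$
This is the standard envelope-theorem computation, using uniqueness of the minimizer. So $r_M^2$ is $C^k$ because its gradient is $C^{k-1}$. Since $r_M>0$ off $M$, the square root preserves $C^k$ and analyticity, so $r_M$ is $C^k$ (resp. analytic) there.
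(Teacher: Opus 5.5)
The paper gives no argument of its own here. It cites Theorem 2 (not Theorem 1) of \cite{leob:stei:2021} for finite $k$, and Theorem 4.1 and Corollary 4.5 of \cite{dude:holl:1994} for $k\ge2$ and the analytic case. Your normal-bundle outline is the standard route, and Steps 2 and 3 go through once Step 1 is established. But Step 1, the only nontrivial step, is not closed as written.

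In part 2, knowing that $\bsy_0$ is not a local minimizer for $\bsx_s=\bsy_0+(s/t)\bsu_0$ with $s>t$ only gives $p_M(\bsx_s)\ne\bsy_0$. The point $\bsx_s$ can still have a unique nearest point elsewhere, so nothing contradicts $\bsx\in\co(M)$. In the ``cleaner route'', local injectivity of $\Phi$ does not force $d\Phi$ to be invertible: $x\mapsto x^3$ is a $C^\infty$ homeomorphism of $\real$ with zero derivative at $0$.

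The two halves have to be combined, as follows.
\begin{itemize}
\item On an open $W\subset\co(M)$ containing $\bsx$, the map $\sigma(\bsx')=(p_M(\bsx'),\bsx'-p_M(\bsx'))$ is continuous. It is injective into the $d$-manifold $\nu M$, since $\Phi\circ\sigma=\mathrm{id}$.
\item By invariance of domain, $\sigma(W)$ is an open neighborhood of $(\bsy_0,\bsu_0)$ on which $\Phi=\sigma^{-1}$.
\item Since $\bsy_0$ minimizes, $I-tL$ is positive semidefinite (your part 1). Suppose it were singular.
\item Then for $s>t$ close to $t$, the point $(\bsy_0,(s/t)\bsu_0)$ lies in $\sigma(W)$ and $\Phi$ maps it to $\bsx_s$. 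So it equals $\sigma(\bsx_s)$, that is, $p_M(\bsx_s)=\bsy_0$.
\item But $I-sL$ has the negative eigenvalue $1-s/t$, so $\bsy_0$ is not a local minimizer of $\Vert\bsx_s-\cdot\Vert$ on $M$. This is a contradiction.
\end{itemize}

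There is a second, smaller gap. Continuity of $p_M$ on $\unpp(M)$ by a compactness argument requires $M$ to be closed, which the statement does not assume, and it fails in general. In the example of \cite{dude:holl:1994} quoted in the paper, $M=(\real\times\{-1,1\})\setminus\{(0,1)\}$, the point $(0,0)\in\unpp(M)$ projects to $(0,-1)$. Meanwhile the points $(1/n,\delta)$ with small $\delta>0$, which are also in $\unpp(M)$, project to $(1/n,1)$.

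What you need, and what is true, is continuity on $\co(M)$. A subsequential limit $\bar\bsy$ of $p_M(\bsx_n)$ is a nearest point of $\bsx$ in $\overline{M}$. If $\bar\bsy\notin M$, then every point of the open segment from $\bsx$ to $\bar\bsy$ has $\bar\bsy$ as its unique nearest point in $\overline{M}$, so its distance to $M$ is not attained. These points accumulate at $\bsx$, contradicting $\bsx\in\co(M)$.

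Accordingly, state the $k=1$ case, the identification of the local inverse with $(p_M,\mathrm{id}-p_M)$ in Step 2, and the envelope computation in Step 3 on $\co(M)$ rather than on $\unpp(M)$.
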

\begin{proof}
For $1\le k<\infty$, this is Theorem 2 of \cite{leob:stei:2021}.  
\cite{dude:holl:1994} have the results for $k\ge2$ and for the analytic case.
Their Theorem 4.1 covers $p_M$ and
their Corollary 4.5 covers $r_M$.
\end{proof}

\cite{choi:choi:moon:1997} define the medial axis of a CCM boundary
$\Omega$ in a way that is similar to $(\unpp(\partial\Omega))^c$
but with a few differences.
\begin{definition}\label{def:ma}
The medial axis of $\Omega$, denoted $\ma(\Omega)$
is the set of points $\bsz\in\Omega$ that are centers
of maximal inscribed disks.
That is $B(\bsz,r)\subset\Omega$
holds for some $r\ge0$ and if $B(\bsz,r)\subset B(\tilde\bsz,r')$ for $r'>r$
then  $B(\tilde\bsz,r')\not\subset\Omega$.
    \end{definition}

The medial axis $\ma(\Omega)$ in Definition~\ref{def:ma} differs from the set of points in Definition~\ref{defn:medial} with non-unique projections onto $\Omega$. Let $\wt\ma(\Omega)=(\unpp(\Omega))^c$ be that definition of medial points from \cite{leob:stei:2021}. Then $\wt\ma(\Omega)\setminus\ma(\Omega)$ is made up of points in the exterior of $\Omega$ with non-unique projections.  These do not affect the WoS algorithm running inside $\Omega^\circ$. The set $\ma(\Omega)\setminus \wt\ma(\Omega)$ contains corner points of $\partial\Omega$ where
a disk of radius $0$ is maximal but only touches $\partial\Omega$
at one point. For example, if $\Omega$ is a rectangle then $\ma(\Omega)$ has the diagonals including vertices while $\wt\ma(\Omega)$ excludes the vertices. Theorem 6.2 of \cite{choi:choi:moon:1997} shows that a CCM domain has only finitely many of these points.

The most important result from \cite{choi:choi:moon:1997} for our
present purposes is the following theorem.

\begin{theorem}\label{thm:fromccm}
Let $\Omega\subset\real^2$ be a CCM domain.
Then $\ma(\Omega)$ is a geometric graph.
\end{theorem}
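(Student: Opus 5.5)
The statement is the main theorem of \cite{choi:choi:moon:1997}, and the paper clearly intends to cite it, not rederive it: the proof should be one line, ``This is Theorem 5.1 (the main structure theorem) of \cite{choi:choi:moon:1997}.'' Still, here is the plan I would follow to reconstruct their argument from the CCM hypotheses, in case a self-contained account were wanted.

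\textbf{Local structure.} First I would describe $\ma(\Omega)$ near each of its points. Take $\bsz\in\ma(\Omega)$ with maximal disk $B(\bsz,r)$. The contact set $B(\bsz,r)\cap\partial\Omega$ splits into three cases.
\begin{compactenum}[\quad(a)]
\item It consists of finitely many points or arcs.
\item It is a single point, at which $r$ equals the radius of curvature. These are curvature centres.
\item $r=0$ at a concave corner. These are the points discussed before the theorem.
\end{compactenum}
Analyticity of the boundary arcs is what guarantees that contact sets are either finite or whole circular arcs. Two analytic curves agreeing with a circle on an infinite set agree on a whole arc, so the $C^\infty$ pathologies cannot occur. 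Near a point with exactly two contact points, I would write $\ma(\Omega)$ as the zero set of $r_{\ca_1}-r_{\ca_2}$, where $\ca_1,\ca_2$ are the two local boundary arcs. By Theorem~\ref{thm:itssmooth} this is an analytic function with nonzero gradient. The implicit function theorem then makes $\ma(\Omega)$ locally an analytic curve.

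\textbf{Finiteness.} At points with three or more contacts, at curvature-centre endpoints and at corners, I would show these special points are isolated. Each is a solution of an analytic system in finitely many parameters, one parameter per contact location. By compactness of $\Omega$ and analyticity, such a set has finitely many isolated points, or else forces a whole circular arc, which is excluded because $\partial\Omega$ is not a circle. This gives finitely many vertices.

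\textbf{Edges and connectedness.} Between vertices, the two-contact edges are analytic curves. Their length is finite because they are Lipschitz images of bounded parameter intervals on $\partial\Omega$. End tangents exist because of the analytic (Puiseux-type) behaviour near the vertices. Connectedness of $\ma(\Omega)$ follows from the fact that $\Omega$ deformation-retracts onto it, a homotopy equivalence argument for the connected set $\Omega$.

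\textbf{Main obstacle.} The hard step is finiteness of the vertex set, especially accumulation of curvature-centre endpoints and the behaviour near concave corners. This is exactly where real analyticity, rather than $C^\infty$ smoothness, is essential.
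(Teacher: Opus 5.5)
Citing \cite{choi:choi:moon:1997} is exactly what the paper does, so your one-line proof takes the right approach. However, the reference is wrong: the paper's proof is ``This is Theorem 8.2 of \cite{choi:choi:moon:1997},'' not a Theorem 5.1, so correct the theorem number.

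The reconstruction is not needed, but it contains errors you should not rely on. First, zero-radius maximal disks occur at \emph{convex} corners, where the interior angle is below $\pi$, as at the rectangle vertices discussed before the theorem. At a reflex corner a small disk of positive radius fits in $\Omega$ with the corner on its boundary, so $B(\bsz,0)$ is not maximal there.

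Second, circular arcs are not excluded. CCM domains may have circular-arc pieces, as the Pac-Man, dumbbell and disk examples do; only the case where $\partial\Omega$ is a single circle is ruled out. That branch closes differently. A circular piece contributes one medial point, its centre, whose contact set is an arc. On a non-circular analytic arc the curvature is a nonconstant analytic function and so has finitely many local maxima.

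Third, \emph{compactness plus analyticity} does not by itself give finitely many vertices. Your analytic charts live in $\Omega^\circ$, and isolated special points could a priori accumulate at a convex corner, where the distances to the two incident arcs are not analytic. Ruling this out needs the separate corner analysis that you name as the main obstacle but then skip.

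Fourth, your two-contact chart needs $r$ strictly below the radius of curvature at both contacts. If one contact is a curvature maximum with $r=1/\kappa$, then $\bsz\notin\co(\ca_1)$ and $r_{\ca_1}$ is not analytic at $\bsz$. Such points must be added to the vertex set.

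Finally, finite edge length via a \emph{Lipschitz} parametrization by contact location is asserted, not shown. Near a curvature-centre endpoint the two contacts coalesce, and the speed of the medial point becomes a $0/0$ expression. That needs the same Puiseux-type analysis you invoke for end tangents.
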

\begin{proof}
This is Theorem 8.2 of \cite{choi:choi:moon:1997}.
\end{proof}

\begin{theorem}\label{thm:itswosregular} 
Let $\Omega$ be a CCM domain where all of the arcs in its definition are analytic. Then $r_{\partial\Omega}$ is analytic on $\Omega^\circ\setminus \ma(\Omega)$. 
Additionally, suppose that $\ca$ is one of those piece-wise analytic simple closed curves in $\partial\Omega$.
Then the WoS problem is regular.
\end{theorem}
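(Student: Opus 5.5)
Two things have to be shown: that $r_{\partial\Omega}$ is analytic off $\ma(\Omega)$, and that $r_\ca$ is analytic off a finite union of geometric graphs. For the first, fix $\bsz\in\Omega^\circ\setminus\ma(\Omega)$. Since $\partial\Omega$ is compact, $r=r_{\partial\Omega}(\bsz)>0$ is attained at some $\bar\bsz$. The disk $B(\bsz,r)$ is inscribed but not maximal, so it lies in a strictly larger inscribed disk. That larger disk touches $\partial\Omega$ only at $\bar\bsz$, and it is tangent there. Two consequences follow. The projection $p_{\partial\Omega}(\bsz)$ is unique. Also, $\bar\bsz$ cannot be a point where two arcs meet at a reflex corner, because such a corner admits no tangent disk of positive radius inside $\Omega$. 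So $\bar\bsz$ lies in the relative interior of one analytic arc $\gamma$, or at a convex corner. In the convex-corner case the larger disk forces a one-sided tangency, so one arc's extension still serves.

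The next step is to localize. Because $\ma(\Omega)$ is closed (it is a geometric graph by Theorem~\ref{thm:fromccm}, hence compact), there is a neighborhood $U$ of $\bsz$ in $\Omega^\circ\setminus\ma(\Omega)$. For $\tilde\bsz\in U$ I would show that $r_{\partial\Omega}(\tilde\bsz)=r_{\gamma'}(\tilde\bsz)$, where $\gamma'$ is a small open analytic extension piece of $\gamma$ around $\bar\bsz$. This uses continuity of the projection on $\unpp$ together with compactness, arguing by contradiction with a sequence $\tilde\bsz_j\to\bsz$ whose nearest points stay away from $\bar\bsz$. Then $\bsz\in\co(\gamma')$, and Theorem~\ref{thm:itssmooth} (analytic case, with $M=\gamma'$ an analytic 1-manifold) gives analyticity of $r_{\gamma'}$, hence of $r_{\partial\Omega}$, near $\bsz$. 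I need the transversality that keeps $\bsz$ away from the focal set of $\gamma'$: $r<1/\kappa(\bar\bsz)$. This holds because the strictly larger tangent disk has radius at most the radius of curvature.

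For $r_\ca$ the same argument applies with $\partial\Omega$ replaced by the closed curve $\ca$. When $\ca$ is an inner boundary curve, I would form the auxiliary domain $\Omega_\ca$, the bounded region enclosed by $\ca$, or its complement intersected with a large disk. This is again a CCM domain built from the same analytic arcs, with a circle added if needed. It is not a pure circle in the problematic sense, and if it were, $r_\ca$ would be explicit and analytic off the center. Theorem~\ref{thm:fromccm} then shows that $\ma(\Omega_\ca)$ is a geometric graph, and the argument above shows $r_\ca$ is analytic on $\Omega^\circ$ off $\ma(\Omega_\ca)\cup\{\text{finitely many points}\}$. Points with $\bsz$ on the far side of $\ca$ do not arise, since $\Omega^\circ$ lies on one side of $\ca$. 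Setting $S=\ma(\Omega)\cup\ma(\Omega_\ca)$ intersected with $\Omega^\circ$ then gives regularity. Intersecting geometric graphs with the open set $\Omega^\circ$ can cut edges, so I would instead phrase $S$ as a subset of a finite union of geometric graphs and adjust Definition~\ref{def:regularwos} loosely.

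I expect the main obstacle to be the localization step in the first paragraph: making sure that nearest points do not jump to another arc or to a corner. The corner and endpoint cases need care. Where two arcs meet at a corner, $\partial\Omega$ is not a single $C^k$ manifold, so Theorem~\ref{thm:itssmooth} cannot be applied to all of $\partial\Omega$ at once. This is why I would reduce to a single analytic extension $\gamma'$ and lean on the maximal-disk characterization of $\ma(\Omega)$ in Definition~\ref{def:ma}.
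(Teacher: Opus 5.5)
Your skeleton matches the paper's proof. The maximal-disk argument gives a unique nearest point off $\ma(\Omega)$. Closedness of the geometric graph $\ma(\Omega)$ makes $\Omega^\circ\setminus\ma(\Omega)$ an open subset of $\unpp(\partial\Omega)$. Theorem~\ref{thm:itssmooth} then gives analyticity, and $r_\ca$ is handled through an auxiliary CCM domain bounded by $\ca$ and, if needed, a large circle. You are right that $\partial\Omega$ is not a submanifold where its arcs meet; the paper's proof passes over this by applying Theorem~\ref{thm:itssmooth} with $M=\partial\Omega$. Your localization to a short open piece $\gamma'$ of one arc is a valid repair when $\bar\bsz$ lies in the relative interior of an arc. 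Nearest points of nearby $\tilde\bsz$ converge to $\bar\bsz$, so $r_{\partial\Omega}=r_{\gamma'}$ near $\bsz$ and $\bsz\in\co(\gamma')\setminus\gamma'$.

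The gap is the corner case. You have the two kinds of corner reversed, and the case cannot be repaired. At a corner with interior angle less than $\pi$, no disk of positive radius inside $\Omega$ passes through the vertex, so that case is empty. At a corner $\bsy$ with interior angle greater than $\pi$, a wedge of nearby interior points has $\bsy$ as its unique nearest point; the wedge is bounded by the normal rays of the two arcs at $\bsy$. Inside the wedge $r_{\partial\Omega}(\tilde\bsz)=\Vert\tilde\bsz-\bsy\Vert$, which is not the distance to an extension of either arc. Across the bounding rays the formula switches between vertex distance and arc distance.

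Concretely, take the L-shaped CCM domain $\Omega=[-1,1]^2\setminus(0,1]^2$ and $\bsz=(-\delta,0)$ with $0<\delta<1/2$. Near $\bsz$ one has $r_{\partial\Omega}(x,y)=-x$ for $y\ge0$ and $r_{\partial\Omega}(x,y)=(x^2+y^2)^{1/2}$ for $y<0$, so $r_{\partial\Omega}$ is not even $C^2$ at $\bsz$. Yet $B(\bsz,\delta)\subset B((-\delta',0),\delta')\subset\Omega$ for $\delta<\delta'\le 1/2$, so $\bsz\notin\ma(\Omega)$. A tangential junction of two different arcs, such as a segment meeting a semicircle in a stadium, fails the same way along the common normal.

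So the first assertion is false as stated, whether one follows your route or the paper's. What your localization does prove is analyticity on $\Omega^\circ$ away from $\ma(\Omega)$ and finitely many normal segments issuing from the junction points. The corners of $\ca$ create such segments for the auxiliary domain as well, not ``finitely many points''. These segments are geometric graphs, so enlarging $S$ by them rescues the regularity conclusion.
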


\begin{proof} 
For the first claim, 
it is enough to show that $\Omega^\circ\setminus\ma(\Omega)\subset\unpp(\partial\Omega)$. 
To see why this is enough, we first note that the geometric graph $\ma(\Omega)$ is a closed set and hence $\Omega^\circ\setminus\ma(\Omega)$ is open.
Then once the above inclusion is established, it follows that $\Omega^\circ\setminus\ma(\Omega)$
is a subset of $\unpp(\partial\Omega)^\circ=\co(\partial\Omega)$. Also, since $\Omega^\circ\subset(\partial\Omega)^c$, we have  $\Omega^\circ\setminus\ma(\Omega)\subset\co(\partial\Omega)\setminus\partial\Omega$. 
Then the analytic case of Theorem~\ref{thm:itssmooth} applies to show that $r_{\partial\Omega}$
is analytic on $\Omega^\circ\setminus(\ma(\Omega))$.

Now suppose that $\Omega^\circ\setminus\ma(\Omega) \not\subset\unpp(\partial\Omega)$. 
Then there is a point $\bsz\in \Omega^\circ\setminus\ma(\Omega)$ with two or more nearest points in $\partial\Omega$. 
That point $\bsz$ would then be the center of a maximal disk inside $\Omega$ placing it in $\ma(\Omega)$ which is a 
contradiction, establishing that $r_{\partial\Omega}$ is analytic on $\Omega^\circ\setminus\ma(\Omega)$.

For $r_\ca$, choose a very large disk $D$ with $\Omega\subset D$
and $r_{\partial D}(\bsz) > r_{\ca}(\bsz)$ for all $\bsz\in\Omega$.
Now $\partial D$ and $\ca$ define the boundary of a CCM domain
$\Psi\supset \Omega$. By the above result, $r_{\partial\Psi}$
is an analytic function on $\Psi^\circ\setminus\ma(\Psi)$.
Then because $\Omega\subset \Psi$, $r_{\partial\Psi}$ is analytic
on $\Omega^\circ\setminus\ma(\Psi)$.  Because $D$ is so large,
$r_{\partial\Psi}(\bsz)=\min( r_{\partial D}(\bsz),r_\ca(\bsz))=r_\ca(\bsz)$ for all $\bsz\in\Omega$
and so $r_\ca$ is analytic on $\Omega^\circ\setminus\ma(\Psi)$.

Finally both medial axes $\ma(\Omega)$  and $\ma(\Psi)$
are geometric graphs. That makes the WoS problem regular.
\end{proof}

\begin{remark}
Theorem~\ref{thm:itswosregular} applies directly to the gasket example.  There $\Omega$ is the whole domain and $\ca$ could be the closed curve bounding any of the $50$ holes or it could be the exterior boundary curve.  In many applications of WoS, $\partial\Omega$ is made up of closed curves, that are in turn made up of finitely many pieces, and $u$ is constant within such pieces but not on the whole closed curve. That brings additional complexity which we study in Section~\ref{sec:splitcurve}.
\end{remark}

\subsection{Distance to a piece of $\partial\Omega$}\label{sec:splitcurve}

The distance to an arc $\ca$ that is just one analytic piece of one of the closed curves bounding $\Omega$ is much more complicated than the combination of \cite{dude:holl:1994}
and \cite{choi:choi:moon:1997}. Such a set $\ca$ has endpoints that
are not allowed by Theorem~\ref{thm:itssmooth} and it is not the boundary
of a compact set, which Theorem~\ref{thm:fromccm} requires. We did not see any treatment of this case in the literature that is as comprehensive as the study of CCM domains in \cite{choi:choi:moon:1997}. 

Here we study $r_\ca$ for some special cases and then sketch why we think its analyticity away from a geometric graph should hold more generally.

In the special cases we consider next, $\ca$ is either a line segment or a portion of a circular arc. We also assume that any straight line in $\real^2$ that intersects $\Omega$ does so in a finite set of disjoint closed intervals. Then analyticity holds for the distance from these curves, apart from points in a finite union of geometric graphs.
Many useful domains $\Omega$ can be constructed using line segments and circular
arcs as pieces of $\partial\Omega$. This includes all the two dimensional examples we consider in this paper. 

\subsection{$\ca$ is a line segment}
Suppose that $\ca$ is a finite line segment in $\real^2$. Without loss of generality,
$\ca = [0,1]\times\{0\}$.  Then 
$$r_\ca(\bsz) = \bigl(\dist(z_1,[0,1])^2+ z_2^2\bigr)^{1/2}
$$
which is analytic everywhere except $\ca\cup (\{0,1\}\times\real)$.
This exceptional set is the union of a line segment and two lines.  Its intersection with the convex hull of $\Omega$ is a geometric graph. The holes in $\Omega$ may split that geometric graph into a finite set of smaller geometric graphs but under our assumption they cannot yield an infinite set of geometric graphs.

\subsection{$\ca$ is part of circular arc}
Suppose instead that $\ca$ is a portion of a circular arc.
Without loss of generality $\ca = \{ \phi(t) \mid |t|\le c\}$
for $\phi(t) = (\cos(t),\sin(t))$ and $0<c<\pi$.
Write $\bsz$ in polar coordinates with radius $r(\bsz)\ge0$ and angle $\theta(\bsz)\in[-\pi,\pi)$.
Define regions
\begin{align*}
S_{\mathrm{In}} & =\{\bsz\in\real^2\mid |\theta(\bsz)|<c,\ 0<r(\bsz)<1\},\\
S_{\mathrm{Out}} & =\{\bsz\in\real^2\mid |\theta(\bsz)|<c,\ r(\bsz)>1\},\\
S_{\mathrm{Up}} &=\{\bsz\in\real^2\mid c<\theta(\bsz)<\pi\}, \quad\text{and}\\
S_{\mathrm{Down}}&=\{\bsz\in\real^2\mid -\pi <\theta(\bsz)<-c\},
\end{align*}
depicted in Figure~\ref{fig:circarc}.
A point $\bsz$ in $S_{\mathrm{In}}$ or $S_{\mathrm{Out}}$ projects onto 
$\phi(\theta(\bsz))$ and has $r_\ca(\bsz)=|r(\bsz)-1|$
which is analytic on those two sets.
If $\bsz\in S_{\mathrm{Up}}$
then $\bsz$ projects to $\phi(c)$ and $r_{\ca}(\bsz)=\Vert\bsz-\phi(c)\Vert$
is analytic on $S_{\mathrm{Up}}$. Similarly, $r_\ca$ is analytic on $S_{\mathrm{Down}}$.
The exceptional sets where $r_\ca$ is not analytic are $\ca$, 
the point $(0,0)$ which projects non-uniquely onto $\ca$,
the ray $R_0=\{(-t,0)\mid 0< t<\infty\}$ of points with $r_{\phi(c)}(\bsz)=r_{\phi(-c)}(\bsz)=\dist(\bsz,\ca)$,
and the two rays $R_a=\{r(\cos(a),\sin(a))\mid 0< r<\infty\}$ for $a=\pm c$
which each separate a region projecting to $\ca^\circ = \phi( (-c,c))$
from a region projecting to an endpoint of $\ca$.

As in the case of a line segment, $r_\ca$ is analytic on the convex hull of $\Omega$ apart from $\ca$ itself and three line segments through the origin. Then analyticity holds apart from a finite union of geometric graphs and the problem is WoS regular.

\begin{figure}
\centering
\includegraphics[width=.9\hsize]{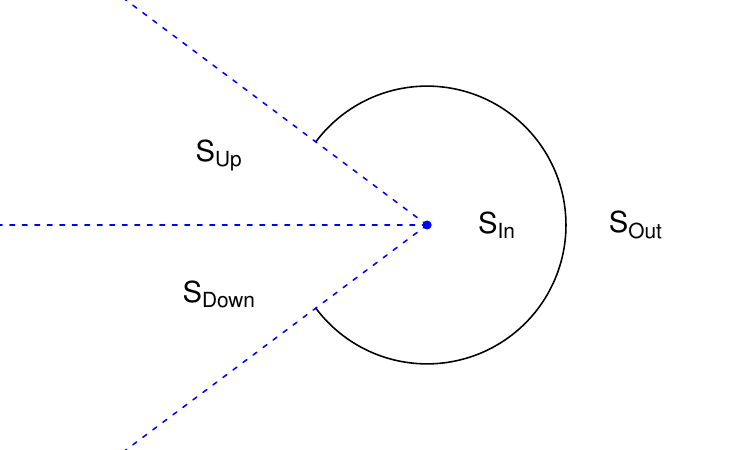}
\caption{\label{fig:circarc} The curve $\ca$ is a circular arc.
Together with the shown rays it defines $4$ open regions of $\real^2$ on which $r_\ca$ is analytic.
}
\end{figure}

\subsection{More general $\ca$}
The relevant literature for this case considers medial axes, point/curve bisectors and self bisectors of curves. Lacking results like those in \cite{choi:choi:moon:1997}, we sketch our reasons for believing that the distance to an analytic arc $\ca$ will be analytic apart from a geometric graph quite generally and not just for special cases like line segments and circular arcs.
We draw on a result in \cite{faro:rama:1998} which uses parameterized arcs. We write our real analytic arc as $\ca = \{\phi(t) \mid 0\le t\le 1\}$ and we assume that $\phi'(t)\ne\bszero$ at any $t\in[0,1]$.

With this parameterization, we define an internal curve $\ca^\circ = \phi( (0,1))$ and
endpoints $\bsa =\phi(0)$ and $\bsb=\phi(1)$.
Then
\begin{align}\label{eq:componentdistances}
r_\ca(\bsz)  = \min\bigl( r_{\bsa}(\bsz), r_{\ca^\circ}(\bsz),r_{\bsb}(\bsz)\bigr)
 = \min\bigl( \Vert \bsz-\bsa\Vert, r_{\ca^\circ}(\bsz),\Vert \bsz-\bsb\Vert\bigr).
\end{align}
The endpoint distances $\Vert\bsz-\bsa\Vert$ and $\Vert\bsz-\bsb\Vert$
are analytic at all but $\bsa$ and $\bsb$ respectively. The function $r_\ca$ can fail to be analytic at points where $r_{\ca^\circ}$ is not analytic or at points where two or more of the distances in~\eqref{eq:componentdistances} are equal.  We consider those equalities first and then describe how smoothness of $r_{\ca^\circ}$ is the gap that remains.

The set of points $\bsz$ where $\Vert\bsz-\bsa\Vert=\Vert\bsz-\bsb\Vert$ is a line $\ell_{\bsa\bsb}$ 
and so its intersection with $\Omega$ is contained within a finite union of line segments. Hence it is a finite union of geometric graphs.

We can never have $\dist(\bsz,\bsa)<\dist(\bsz,\ca^\circ)$.  Those distances are equal on the set 
$$
E_{\bsa} = \bigcap_{0<t<1}\bigl\{\bsz\mid \Vert\bsz-\bsa\Vert <\Vert \bsz-\phi(t)\Vert\bigr\}
$$
and there is an analogous set $E_{\bsb}$ at the other endpoint of $\ca$.
The set $E_{\bsa}$ is an intersection of open half-spaces so it is convex. For any $\bsz\in E_{\bsa}^{\circ}\setminus\ell_{\bsa\bsb}$, there is an open set $U$ with $\bsz\in U\subset E_{\bsa}\setminus\ell_{\bsa\bsb}$ on which $\dist(\bsz,\ca)=\dist(\bsz,\bsa)=\Vert\bsz-\bsa\Vert$ which is analytic. That argument does not work on $\partial E_{\bsa}$ which \cite{faro:rama:1998} discuss.

The set of points where $\Vert\bsz-\bsa\Vert =\dist(\bsz,\ca)$ is part of the point/curve bisector between $\ca$ and $\bsa$. Point/curve bisectors where the point is part of the curve are a degenerate case and Section 3.2 of \cite{faro:rama:1998} considers the bisector of a curve and one of its endpoints.  They say that the bisector is a subset of three curves, defined by their equations (a), (b) and (c). Curve (c) is the normal line  to $\ca$ at $\bsa$. Curve (b) is the linear end-tangent extension to $\ca$ at the endpoints. Curve (a) is defined using the unit normal curve $\nu(t)$ where $\nu(t)$ is $\phi'(t)/\Vert\phi'(t)\Vert$ rotated through 90 degrees.
Then curve (a) has  the expression
$$a(t)=
\phi(t) + \nu(t)\frac{|\bsa-\phi(t)|^2}{2(\bsa-\phi(t))^\tran\nu(t)}
$$
and we interpret $a(0)$ as $\bsa$.   Now if $\phi$ and $\nu$ are both analytic on $\ca^\circ$, then so is~$a$ on intervals where the denominator does not vanish.  As $t$ approaches an interior point where the denominator vanishes, $\Vert a(t)\Vert$ diverges to infinity, thereby leaving the domain $\Omega$. As a result we expect the curve $a(t)$ to intersect $\Omega$ only in a finite union of geometric graphs.

The gap is that we did not find conditions in the literature to ensure that  $r_{\ca^\circ}$ is analytic apart from a finite union of geometric graphs. We would need the complement of $(\unpp(\ca^\circ))^\circ$ to be a finite union of geometric graphs. 
Theorem 3.9 of \cite{basu:pras:2023} is similar to what we need but we could not determine whether it applies to $\ca^\circ$ nor whether the resulting one dimensional simplicial complex it describes would qualify as a finite union of geometric graphs.

\section{Differential geometry theorems}\label{sec:diffgeo}

We need some results from differential geometry.  These are not commonly used in the RQMC literature so we state them here. We will not need the most general versions of these theorems, just the ones with statements that match our uses.

We need the notions of regular and critical values of a function as well as regular and critical points. For a smooth function $f:M\to N$ between manifolds the
point $\bsx\in M$ is a critical point of $f$ if the Jacobian of $f$ at $\bsx$ has less than full rank.  
Then $\bsy = f(\bsx)$ is a critical value of $f$.  
If $\bsx$ is not a critical point of $f$ then
it is a regular point of $f$.
If every $\bsx$ with $f(\bsx)=\bsy$ is a regular point
of $f$ then $\bsy$ is a regular value of $f$.
A regular level set is the level set of a regular value.

\begin{theorem}[Pre-image theorem]\label{thm:preimage}
Every regular level set of a smooth
map between smooth manifolds is a properly embedded submanifold whose codimension is equal to the dimension of the codomain.
\end{theorem}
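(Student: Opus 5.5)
The plan is the standard proof via the implicit function theorem in local coordinates. Let $f:M\to N$ be smooth, with $\dim M=m$ and $\dim N=n$. Let $\bsy\in N$ be a regular value and set $S=f^{-1}(\bsy)$. If $S$ is empty there is nothing to prove. Otherwise, every $\bsx\in S$ is a regular point, so the differential $df_{\bsx}:T_{\bsx}M\to T_{\bsy}N$ has full rank. Since $S$ is nonempty this forces $m\ge n$, and full rank then means $df_{\bsx}$ is surjective, with rank $n$.

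First, choose smooth charts $\varphi$ around $\bsx$ and $\chi$ around $\bsy$ with $\chi(\bsy)=\bszero$. Write $F=\chi\circ f\circ\varphi^{-1}$, a smooth map from an open subset of $\real^m$ into $\real^n$. Its Jacobian at $\varphi(\bsx)$ has rank $n$, so after permuting coordinates some $n\times n$ minor is invertible.

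Second, apply the implicit function theorem, or equivalently the constant rank theorem. This gives a diffeomorphism $\Phi$ of a neighborhood of $\varphi(\bsx)$, namely $\Phi(\bsu)=(F(\bsu),\bsu_{\text{rest}})$, whose Jacobian is invertible. In the new coordinates, $F$ is the projection onto the first $n$ coordinates. Then $S$ intersected with the chart domain is exactly the slice where the first $n$ coordinates vanish. So $S$ is locally an $(m-n)$-dimensional slice, and these slice charts make $S$ an embedded submanifold of codimension $n=\dim N$.

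Third, show properness, meaning $S$ is closed in $M$. This holds because $S=f^{-1}(\{\bsy\})$, $\{\bsy\}$ is closed, and $f$ is continuous. For embedded submanifolds, being closed is equivalent to the inclusion map being proper.

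The main obstacle is the change-of-coordinates step: checking that the slice charts are smoothly compatible with one another, so that $S$ carries the subspace topology and a smooth structure that makes it embedded. The cleanest route is to use the local slice criterion for embedded submanifolds, as in Lee's \emph{Introduction to Smooth Manifolds}. I would cite that criterion and not re-derive it. In practice, since this is a textbook theorem (Guillemin and Pollack, or Lee, Corollary 5.14), the paper's proof will likely just cite it.
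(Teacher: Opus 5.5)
Your argument is correct: it is the standard slice-chart proof that underlies Corollary 5.14 of Lee, and the paper's proof simply cites that corollary. One caveat is that your inference ``nonempty regular level set forces $m\ge n$'' needs \emph{regular} to mean that $df_{\bsx}$ is surjective, which is Lee's convention, rather than the paper's phrase ``full rank''. Under the paper's wording, for $m<n$ the immersion $t\mapsto(t,0)$ has full rank everywhere, yet $f^{-1}((0,0))$ has codimension $1$ rather than $2$; this is harmless for the paper's use of the result, where the codomain is $\real$.
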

\begin{proof}
This is the statement from Corollary 5.14 of \cite{lee:2013}.
\end{proof}
Smooth means $C^\infty$ throughout \cite{lee:2013} and this is enough for our purposes.
The level set of $f:M\to N$ has dimension equal to the dimension of $M$ minus
the dimension of $N$, which in our case will be the dimension of $M$ minus~$1$.
\cite{guil:poll:1974} call their version of this result,  the pre-image theorem.  Because the level set is an embedded manifold, it is smooth \citep[page 98]{lee:2013}. 
We will not reference the properness of this embedding.

\begin{theorem}[Rademacher's theorem]\label{thm:rademacher}
Let $f:\real^n\to\real^m$ be locally Lipschitz.  Then $f$ is differentiable almost everywhere.
\end{theorem}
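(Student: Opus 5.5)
This is a classical result, and in the paper it suffices to cite it, for example Theorem 3.2 of \cite{evan:gari:2015}. If one wanted a self-contained argument, I would follow the standard route sketched below.

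First I would make three reductions. Differentiability of $f=(f_1,\dots,f_m)$ at a point is equivalent to differentiability of every component $f_j$, and a finite union of null sets is null, so it is enough to treat $m=1$. Since $\real^n$ is a countable union of open balls, it is enough to show differentiability almost everywhere on each ball. On a ball $f$ is Lipschitz, and by the McShane extension $\tilde f(\bsx)=\inf_{\bsy}\bigl(f(\bsy)+L\Vert\bsx-\bsy\Vert\bigr)$ (infimum over $\bsy$ in the ball) we may assume $f:\real^n\to\real$ is globally $L$-Lipschitz.

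The next step is to get directional derivatives almost everywhere. For a fixed unit vector $\bsu$, the restriction of $f$ to each line parallel to $\bsu$ is Lipschitz, hence absolutely continuous, hence differentiable almost everywhere by the one dimensional Lebesgue theorem. The set where the limit defining $D_{\bsu}f$ fails to exist is Borel, because it can be written through $\limsup$ and $\liminf$ of difference quotients over rational step sizes. Fubini then shows that this set has Lebesgue measure zero. Applying this with $\bsu=\bse_j$ shows that $\nabla f$ exists almost everywhere, and $|D_{\bsu}f|\le L$ wherever it exists.

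Next I would identify $D_{\bsu}f=\bsu^\tran\nabla f$ almost everywhere. Take a test function $\zeta\in C_c^\infty$. Dominated convergence applied to the difference quotients, together with a change of variables, gives $\int D_{\bsu}f\,\zeta=-\int f\,D_{\bsu}\zeta$. The right side equals $-\sum_j u_j\int f\,\partial_j\zeta=\sum_j u_j\int \partial_jf\,\zeta$. Since $\zeta$ is arbitrary, the identity holds almost everywhere. I would then fix a countable dense set $\{\bsu_\ell\}$ of unit vectors and take $A$ to be the full-measure set on which $\nabla f$ and every $D_{\bsu_\ell}f$ exist and $D_{\bsu_\ell}f=\bsu_\ell^\tran\nabla f$.

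The main obstacle is upgrading these countably many directional derivatives to full (Fréchet) differentiability at each point of $A$. For $\bsx\in A$, set
$$Q(\bsu,t)=\frac{f(\bsx+t\bsu)-f(\bsx)}{t}-\bsu^\tran\nabla f(\bsx).$$
Each $Q(\cdot,t)$ is Lipschitz in $\bsu$ with constant at most $L+\Vert\nabla f(\bsx)\Vert\le(1+\sqrt n)L$, uniformly in $t$. Given $\eta>0$, I would pick a finite $\eta$-net $\bsu_1,\dots,\bsu_N$ of the unit sphere taken from the dense set, and choose $\delta$ so that $|Q(\bsu_\ell,t)|<\eta$ for all $\ell\le N$ and $|t|<\delta$. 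For any unit $\bsu$ the Lipschitz bound then gives $|Q(\bsu,t)|\le\eta+(1+\sqrt n)L\eta$. This shows that $f(\bsx+\bsy)-f(\bsx)-\bsy^\tran\nabla f(\bsx)=o(\Vert\bsy\Vert)$, so $f$ is differentiable on $A$, which completes the proof.
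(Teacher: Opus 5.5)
Your proposal is correct and matches the paper, which proves this statement only by citing Theorem 3.2 of \cite{evan:gari:2015}. Your sketch is essentially the argument given there: reduce to $m=1$ and globally Lipschitz $f$, get directional derivatives almost everywhere via Fubini, identify $D_{\bsu}f=\bsu^\tran\nabla f$ by integrating against test functions, and upgrade to full differentiability using a finite net drawn from a countable dense set of directions.
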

\begin{proof}
This version of Rademacher's theorem is given as Theorem 3.2 of \cite{evan:gari:2015}.
  \end{proof}

  \begin{theorem}[The coarea formula]\label{thm:coarea}
    Let $f:\real^n\to\real$ be Lipschitz continuous.
    Then for each Lebesgue measurable set $A\subseteq\real^n$,
    $$
\int_A \Vert\nabla f(\bsx)\Vert\rd\bsx = \int_{\real}\ch_{n-1}(A\cap f^{-1}(y))\rd y.
    $$
    \end{theorem}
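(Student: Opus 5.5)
The plan is to cite this rather than reprove it: the statement is exactly the coarea formula for a real-valued Lipschitz map, Theorem 3.11 of \cite{evan:gari:2015}. The displayed identity makes sense because Theorem~\ref{thm:rademacher} gives $\nabla f$ almost everywhere, and $\Vert\nabla f\Vert$ is bounded by the Lipschitz constant, so the left side is a well-defined integral. For completeness, the standard argument I would follow has four steps.

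\textbf{Step 1 (measurability and an upper bound).} First show that $y\mapsto\ch_{n-1}(A\cap f^{-1}(y))$ is Lebesgue measurable. Then prove the Eilenberg-type inequality
$$\int_\real \ch_{n-1}(A\cap f^{-1}(y))\rd y \le C\,\mathrm{Lip}(f)\,\vol(A),$$
where $\mathrm{Lip}(f)$ is the Lipschitz constant of $f$. The method is to cover $A$ by small sets $C_\ell$ of diameter at most $\delta$, and use the fact that $f(C_\ell)$ is an interval of length at most $\mathrm{Lip}(f)\diam(C_\ell)$. Integrating the indicator of $f(C_\ell)$ over $y$ and letting $\delta\to0$ bounds the right side of the coarea formula by a multiple of $\vol(A)$. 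This step also shows that both sides of the formula are absolutely continuous in $A$, so sets of measure zero do not matter.

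\textbf{Step 2 (the linear and $C^1$ cases).} When $f$ is linear, the formula is Fubini's theorem after a rotation, with $\Vert\nabla f\Vert$ appearing as the Jacobian factor. When $f$ is $C^1$ and $\nabla f\ne\bszero$ on $A$, use Theorem~\ref{thm:preimage} locally to straighten the level sets and apply the change of variables formula. Partition $A$ into small pieces on which $\nabla f$ is nearly constant, and sum the local linear estimates.

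\textbf{Step 3 (general Lipschitz $f$).} Approximate $f$ by a $C^1$ function $g$ that agrees with $f$ except on an open set of arbitrarily small measure; this is a Lusin-type result for Lipschitz maps. Apply Step 2 to $g$, and use Step 1 to control the exceptional set on both sides.

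\textbf{Step 4 (the critical set; main obstacle).} The set $Z=\{\nabla f=\bszero\}$ is where I expect the most care to be needed, since there the left side vanishes. The goal is to show that $\ch_{n-1}(Z\cap f^{-1}(y))=0$ for almost every $y$. I would try the standard perturbation first. Consider $F(\bsx,x_{n+1})=f(\bsx)+\varepsilon x_{n+1}$ on $\real^{n+1}$, whose gradient is nowhere zero. Apply the nondegenerate case to $Z\times[0,1]$, and let $\varepsilon\to0$; this forces the integral of $\ch_{n-1}$ over level sets restricted to $Z$ to be $O(\varepsilon)$, hence zero.
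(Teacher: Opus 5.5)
Your proposal matches the paper, which likewise proves this only by citing the Evans--Gariepy coarea formula specialized to real-valued $f$, where the Jacobian reduces to $\Vert\nabla f\Vert$; the one discrepancy is that the paper cites it as Theorem 3.10 while you cite Theorem 3.11, so check which number matches the 2015 edition. Your four-step sketch is the standard Evans--Gariepy proof and is not needed, but if you keep it, note that the $O(\varepsilon)$ bound in Step 4 only becomes a bound on $\int_\real\ch_{n-1}(A\cap Z\cap f^{-1}(y))\rd y$ through Step 1 in its general Hausdorff form, $\int_\real \ch_{s-1}(E\cap g^{-1}(t))\rd t\le C\,\mathrm{Lip}(g)\,\ch_s(E)$, applied in $\real^{n+1}$ with $s=n$, with $E$ a level set of $F$ inside $(A\cap Z)\times[0,1]$, and with $g$ the last coordinate.
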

    \begin{proof}
      This version of the coarea formula is Theorem 3.10 of \cite{evan:gari:2015}
      specialized to a real valued function $f$.
    \end{proof}

\begin{theorem}[Sard's theorem]\label{thm:sards}
  Suppose that $M$ and $N$ are smooth manifolds with or without
  boundary and $f:M\to N$ is a smooth map. Then the set of critical
  values of $f$ has measure zero in $N$.
\end{theorem}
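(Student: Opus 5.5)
The paper quotes Sard's theorem as a standard result; in keeping with how Theorems~\ref{thm:preimage}--\ref{thm:coarea} are handled, the simplest justification is to cite it, for example Theorem 6.10 of \cite{lee:2013}, which is stated in exactly this generality (manifolds with or without boundary, smooth maps). Still, here is how I would organize a self-contained proof. It has three stages.

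\textbf{Stage one: reduce to a local statement in coordinates.} Since $M$ is second countable, it is covered by countably many charts, and a countable union of measure-zero sets has measure zero. So it suffices to show the following. Let $U\subset\real^m$ be open (or open in a half-space), let $f:U\to\real^n$ be $C^\infty$, and let $C$ be its critical set. Then $f(C)$ has measure zero. Boundary charts cause no trouble: a smooth function on an open subset of a half-space extends to a smooth function on an open subset of $\real^m$. The critical points of the restriction lie inside the critical set of the extension, so the boundary case follows. Measure zero in $N$ is defined chartwise and is preserved by diffeomorphisms, which are locally Lipschitz.

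\textbf{Stage two: the easy cases.} If $m<n$, every point is critical. Extend $f$ to $U\times\real^{n-m}$ by $F(\bsx,\bsy)=f(\bsx)$. Then $f(U)=F(U\times\{\bszero\})$ is the image of a null set under a locally Lipschitz map between spaces of equal dimension, so it is null. If $n=0$, the target is a point and the claim is trivial.

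\textbf{Stage three: the general case $m\ge n\ge1$, by induction on $m$.} Filter the critical set as $C\supset C_1\supset C_2\supset\cdots$, where $C_k$ is the set of points at which all partial derivatives of $f$ of order $1$ through $k$ vanish. There are three steps.

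\emph{Step (a): $f(C\setminus C_1)$ is null.} Near a point of $C\setminus C_1$, some partial derivative $\partial f_i/\partial x_j$ is nonzero. Use $f_i$ as a new coordinate, so that $f$ has the form $(t,\bsx')\mapsto(t,g_t(\bsx'))$. The critical points of $f$ on each slice are exactly the critical points of $g_t$. By the induction hypothesis each slice of the image is null, and Fubini's theorem (or Tonelli's) then shows the image itself is null.

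\emph{Step (b): $f(C_k\setminus C_{k+1})$ is null for $k\ge1$.} Near such a point, some $k$-th order derivative $w$ has a nonzero first derivative. Then $\{w=0\}$ is locally a hypersurface by Theorem~\ref{thm:preimage} and contains $C_k$ locally. Every point of $C_k$ is a critical point of $f$ restricted to that $(m-1)$-manifold, so the induction hypothesis applies.

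\emph{Step (c): $f(C_k)$ is null once $k>m/n-1$.} By Taylor's theorem, on a cube of side $\delta$ we have $\Vert f(\bsx)-f(\bsx_0)\Vert\le c\,\delta^{k+1}$ for $\bsx_0\in C_k$. Divide a compact cube into $s^m$ subcubes of side $\delta/s$. The cubes meeting $C_k$ have images of total volume at most $s^m(c'(\delta/s)^{k+1})^n$, which tends to $0$ as $s\to\infty$ whenever $n(k+1)>m$.

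The main obstacle is step (a). It needs a careful change of coordinates, together with Fubini's theorem applied to a set that is only known to be a countable union of compact images. I would handle it by covering by countably many compact neighborhoods, so that every set involved is $\sigma$-compact and hence measurable.
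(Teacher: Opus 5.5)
Your proposal is correct and takes the same route as the paper, which justifies this statement only by citing Theorem 6.10 of \cite{lee:2013}, exactly as you suggest. Your optional self-contained sketch is also sound: it reduces to charts, disposes of the cases $m<n$ and $n=0$ directly, and runs the standard Milnor-style induction on $m$ through the filtration $C\supset C_1\supset C_2\supset\cdots$, using $\sigma$-compactness to make Fubini's theorem applicable.
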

\begin{proof}
This version of Sard's theorem is from Theorem 6.10 of \cite{lee:2013}.
\end{proof}

\begin{theorem}[Implicit function theorem]\label{thm:implicit}
  Let $U\subseteq \real^m\times \real$ be an open subset,
  and let $(\bsw,y)$ for $\bsw\in\real^m$ and $y\in\real$ denote
  the standard coordinates on $U$. Suppose $\Phi:U\to\real$ is a smooth
  function, $\bsa,b\in U$ and $c=\Phi(\bsa,b)$. If 
  $\frac{\partial\Phi}{\partial y}(\bsa,b)\ne0$, then there exist neighborhoods
  $V_0\subseteq \real^m$ of $\bsa$ and $W_0\subseteq \real$ of $b$ and
  a smooth function $F:V_0\to W_0$ such that $\Phi^{-1}(c)\cap(V_0\times W_0)$
  is the graph of $F$, that is, $\Phi(\bsw,y)=c$ for $(\bsw,y)\in V_0\times W_0$ if and
  only if $y=F(\bsw)$.
\end{theorem}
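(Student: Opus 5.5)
The statement is the standard implicit function theorem, and like the preceding results in this section it is quoted from a textbook rather than proved here. The plan is to cite it, say from \cite{lee:2013}, and not reprove it. For completeness, though, here is how I would prove it, by reducing it to the inverse function theorem. Define $G:U\to\real^m\times\real$ by $G(\bsw,y)=(\bsw,\Phi(\bsw,y))$. Its Jacobian at $(\bsa,b)$ is block lower triangular, with the identity $I_m$ in the upper left and $\frac{\partial\Phi}{\partial y}(\bsa,b)$ in the lower right corner. So its determinant is $\frac{\partial\Phi}{\partial y}(\bsa,b)\ne0$. The inverse function theorem then gives a neighborhood $U_0\subseteq U$ of $(\bsa,b)$ on which $G$ is a diffeomorphism onto an open neighborhood of $(\bsa,c)$. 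Shrinking $U_0$, I would take it to be a product $V_1\times W_0$ of open sets.

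Next, the inverse $G^{-1}$ has the form $(\bsw,t)\mapsto(\bsw,H(\bsw,t))$ for a smooth $H$, because $G$ leaves the first $m$ coordinates unchanged. Define $F(\bsw)=H(\bsw,c)$. Choose $V_0\subseteq V_1$, a neighborhood of $\bsa$, small enough that $(\bsw,c)$ lies in $G(V_1\times W_0)$ for every $\bsw\in V_0$; this is possible since $G(V_1\times W_0)$ is open and contains $(\bsa,c)$. Then $F:V_0\to W_0$ is smooth.

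For $(\bsw,y)\in V_0\times W_0$, the condition $\Phi(\bsw,y)=c$ is equivalent to $G(\bsw,y)=(\bsw,c)$. Because $G$ is injective on $V_1\times W_0$, this holds exactly when $(\bsw,y)=G^{-1}(\bsw,c)=(\bsw,F(\bsw))$, that is, when $y=F(\bsw)$. Hence $\Phi^{-1}(c)\cap(V_0\times W_0)$ is the graph of $F$.

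The only step needing care is this shrinking. The set on which $G$ is invertible has to be a product $V_0\times W_0$, and $F$ must take values in $W_0$. That is handled by choosing $W_0$ first and then shrinking $V_0$ using continuity of $H$.
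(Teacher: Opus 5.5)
Your plan matches the paper, which proves this only by citing Theorem C.40 of \cite{lee:2013} with $y$ one-dimensional. Your supplementary sketch via $G(\bsw,y)=(\bsw,\Phi(\bsw,y))$ and the inverse function theorem is correct and is the standard argument behind that textbook result. (The shrinking of $V_0$ uses openness of $G(V_1\times W_0)$, exactly as in your construction, not continuity of $H$.)
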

\begin{proof}
  This version of the implicit function theorem is Theorem C.40
  of \cite{lee:2013} specialized to the case where $y$ is one dimensional.
\end{proof}

\section{Minkowski content for WoS}\label{sec:main}

For $d=2$ our RQMC algorithm constructs $\bsz_k$ from $\bsz_0$ and a point $\bsx\in[0,1)^k$.
Here we consider the set of input points in $[0,1)^k$ that cause $\dist(\bsz_k,\partial\Omega)<\varepsilon$ along with $u(\bar\bsz_k)=1$.
We give conditions under which that set has 
$\mupkmo<\infty$ for almost all $\varepsilon$.
A separate analysis in Section~\ref{sec:rqmcforwos} accounts for the possibility that the walk may have terminated at $k'<k$ steps.

We assume that $\Omega\subset \real^2$ is a CCM domain.
Then $\partial\Omega$ is a finite union of closed analytic arcs one of which is $\ca$.
The target function $u:\partial\Omega\to\{0,1\}$ takes the value $1$ on 
$\ca$  and is $0$ in $\partial\Omega\setminus\ca$.
We abbreviate $r_{\ca}$ to $r_1$, $r_{\partial\Omega\setminus\ca}$ to $r_0$
and $r_{\partial\Omega}$ to $r$.

Now let
\begin{align}\label{eq:defcp}
\cp = \bigl\{\bsz\in\Omega^\circ\mid \text{$r_1$ is not analytic}\bigr\}
\bigcup\, \bigl\{\bsz\in\Omega^\circ\mid \text{$r$ is not analytic}\bigr\}
\end{align}
be the collection of points in $\Omega^\circ$ where at least one of
our distance functions fails to be analytic.
For a WoS regular problem, $\cp$ is a finite union of geometric graphs.

The WoS problems we consider start at $\bsz_0\in\Omega$ with $r(\bsz_0)>\varepsilon$.
A point $\bsx\in[0,1)^k$  generates steps $\bsz_j(\bsx)$ for $j=1,\dots,k$ via
$$
\bsz_{j}(\bsx) = \bsz_{j-1}(\bsx) + r(\bsz_{j-1}(\bsx))
\theta(x_j),\quad\text{for $\theta(x)= (\cos(2\pi x),\sin(2\pi x))^\tran$}.
$$
By convention, $\bsz_0(\bsx)=\bsz_0$ a constant function on $[0,1)^k$.
The function $\theta$ above is Lipschitz continuous with
a constant of $2\pi$ equal to the norm of its derivative.

We will use the set
$$\cek =\bigcup_{j=1}^k \bsz_j^{-1}(\cp)\subset\toru^k$$
to represent all walks that ever hit a problematic point
in their first $k$ steps.
Under RQMC sampling, $\Pr(\bsx\in\cek)=0$. 

We will use the following `chamber regularity' assumption
on $\cek$. This assumption is illustrated by Figure~\ref{fig:twosurfaces} with a context described in the proof of Theorem~\ref{thm:wosok}.
\begin{definition}\label{defn:chamber}
The set $\cek$ has chamber regularity if it is contained in the union
of a finite number of smooth $k-1$ submanifolds of $\toru^k$
and for any point $\bsp\in\cek$
there exists a neighborhood $U$ of $\bsp$ 
for which $U\setminus\cek$ has finitely many connected components.
We call those the chambers of $U\setminus\cek$.
\end{definition}
This definition does not allow for two of the manifolds in $\cek$
to intersect infinitely often within any neighborhood of $\bsp$.
That conclusion might possibly follow from arguments using analyticity
similar to those used in \cite{choi:choi:moon:1997} and we think
that WoS regularity is almost enough to give chamber
regularity, but exploring these points is outside the scope of this article.

It is very convenient that $\bsz_k$ is a periodic function on $\real^k$.
We can therefore choose the domain to be the flat torus $\toru^k=\real^k/\ints^k$.
This is a smooth manifold without boundary. 
We will need to apply the implicit function theorem to a function
defined on a flat torus.  That theorem is stated for functions on Euclidean space.
It concerns local properties of the function.  A small neighborhood $U$
of $\bsx\in\toru^k$ can be identified with a small neighborhood $\tilde U$
around a representative of $\bsx$ in Euclidean space.
We can then view a function on $U\subset\toru^k$
as a function on $\tilde U\subset [-1,2)^k\subset\real^k$.

The $k$-step distance to $\ca$ is
$$
r_{k,1}(\bsx) = \dist(\bsz_k(\bsx),\ca) =r_1(\bsz_k(\bsx)).
$$
For RQMC we want  $\mupkmo(\{\bsx\in[0,1)^k\mid r_{k,1}(\bsx)=\varepsilon\})<\infty$.

\begin{theorem}\label{thm:wosok}
For fixed $k\ge1$, a WoS regular problem where $\cek$ 
satisfies chamber regularity
has $\bm_{k-1}(r_{k,1}^{-1}(t))<\infty$   for almost all $t>0$.
\end{theorem}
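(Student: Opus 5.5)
The plan is to combine Sard's theorem with the pre-image theorem on the open set $\toru^k\setminus\cek$, control what happens on $\cek$ by chamber regularity, and then convert finite Hausdorff measure into finite Minkowski content with Theorem~\ref{thm:rect2mink}.

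\textbf{Step 1: analyticity off $\cek$.} On $\toru^k\setminus\cek$ no $\bsz_j(\bsx)$, $j<k$, lies in $\cp$. Hence $r$ is analytic at each $\bsz_{j-1}(\bsx)$ that is used in the recursion. By induction on $j$, every $\bsz_j$ is analytic near $\bsx$. Because $\bsz_k(\bsx)\notin\cp$ as well, $r_{k,1}=r_1\circ\bsz_k$ is analytic, and in particular $C^\infty$, on the open manifold $\toru^k\setminus\cek$.

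\textbf{Step 2: regular level sets.} Sard's theorem (Theorem~\ref{thm:sards}) applied to $r_{k,1}$ on $\toru^k\setminus\cek$ shows that almost every $t>0$ is a regular value. For such $t$, the pre-image theorem (Theorem~\ref{thm:preimage}) makes $L_t=r_{k,1}^{-1}(t)\setminus\cek$ a smooth embedded $(k-1)$-submanifold.

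We also want $\ch_{k-1}$ of the full level set to be finite for a.e.\ $t$. To get this, note that $r_{k,1}$ is Lipschitz on $\toru^k$: it is a composition of $1$-Lipschitz distances with the maps $\bsx\mapsto\bsz_{j-1}+r(\bsz_{j-1})\theta(x_j)$, which are Lipschitz because $\Omega$ is bounded and $\theta$ is $2\pi$-Lipschitz. The coarea formula (Theorem~\ref{thm:coarea}) then gives $\int\ch_{k-1}(r_{k,1}^{-1}(t))\rd t\le \mathrm{Lip}\cdot\vol(\toru^k)<\infty$. So for almost every $t$ the level set has finite $(k-1)$-dimensional Hausdorff measure. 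We fix $t$ that is a regular value, has this finiteness, and satisfies $\ch_{k-1}(r_{k,1}^{-1}(t)\cap\cek)<\infty$. That last condition holds automatically, since $\cek$ lies in finitely many compact smooth $(k-1)$-manifolds.

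\textbf{Step 3: rectifiability.} We need $W=r_{k,1}^{-1}(t)$, which is closed by continuity, to be $(k-1)$ rectifiable in Federer's strict sense, i.e.\ a Lipschitz image of a bounded subset of $\real^{k-1}$.

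The piece $W\cap\cek$ is a subset of finitely many smooth compact $(k-1)$-submanifolds. Each of these is covered by finitely many charts with Lipschitz inverse parametrizations on bounded sets, so it is $(k-1)$ rectifiable. Subsets of rectifiable sets are rectifiable, as noted after Definition~\ref{defn:rect}.

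For the piece $W\setminus\cek$, we use compactness of $W$. Around points off $\cek$, the regular-value condition and the implicit function theorem (Theorem~\ref{thm:implicit}) represent $W$ locally as a Lipschitz graph over $k-1$ coordinates. Around points $\bsp\in\cek$, chamber regularity gives a neighborhood $U$ in which $U\setminus\cek$ has finitely many chambers. In each chamber, the part of $W$ should be a graph whose closure stays controlled near $\cek$.

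A finite subcover, together with Lemma~\ref{lem:finiterect}, then makes $W$ $(k-1)$ rectifiable. Theorem~\ref{thm:rect2mink} gives $\bm_{k-1}(W)=\ch_{k-1}(W)<\infty$, which is the claim. Periodicity lets us work on $\toru^k$ and pass to $[0,1)^k$ at the end.

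\textbf{Main obstacle.} The hardest part is the behaviour of $W$ inside a chamber adjacent to $\cek$. The regular-value property only holds off $\cek$, so the gradient of $r_{k,1}$ may degenerate as points approach $\cek$. A single implicit-function graph may then fail to extend up to the chamber wall, and a chamber could contain infinitely many level-set components accumulating on $\cek$. My first attempt would exploit analyticity: on each chamber, $r_{k,1}$ agrees with the restriction of a function that is analytic on a slightly larger open set, obtained by continuing the relevant branch of $r_1$ and $r$ across $\cp$. Level sets of an analytic function are semianalytic, and on compact sets these admit finite stratifications into analytic pieces of bounded number. That would give finitely many Lipschitz patches per chamber. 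If this fails, a fallback is to use finite $\ch_{k-1}$ from Step 2 together with a lower density bound to limit the number of components.
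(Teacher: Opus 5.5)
\emph{Verdict: genuine gap, at the step you already flagged.} Your outline is the paper's proof step for step:
\begin{itemize}
\item Lipschitz continuity and the coarea formula;
\item analyticity of $r_{k,1}$ off $\cek$ by induction;
\item Sard plus the pre-image and implicit function theorems, giving local Lipschitz graphs at points of $\cl(t)\setminus\cek$;
\item chamber regularity near $\cek$, with wall points handled as subsets of the smooth submanifolds;
\item a finite subcover, Lemma~\ref{lem:finiterect} and Theorem~\ref{thm:rect2mink}.
\end{itemize}
As written, though, it is not a proof, because the step you call the main obstacle is left open.

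Take a chamber $C$ of $U_{\bsp}\setminus\cek$. The implicit function theorem gives a Lipschitz graph only on some neighborhood of each point of $\cl(t)\cap C$. That set is not compact, and its closure reaches the wall, where neither differentiability nor the regular-value property is available. The patches may shrink toward the wall, so compactness yields no finite subcover. What you actually get is countable $(k-1)$ rectifiability. The paper itself notes this is too weak for Theorem~\ref{thm:rect2mink}, and Lemma~\ref{lem:finiterect} fails for countable unions.

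The paper does not supply the missing idea either. It handles the chambers in one sentence, saying each meets $\cl(t)$ in a $(k-1)$ rectifiable set ``by the same implicit function theorem argument''. That is exactly the pointwise argument you rightly distrust.

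Your proposed repairs do not yet close the gap.
\begin{itemize}
\item \emph{Continuing branches across $\cp$.} This works across the interior of a medial-axis edge, where each side's distance is the distance to a single non-focal boundary piece. It fails at vertices of $\cp$. Take an endpoint of $\ma(\Omega)$ that is a center of curvature, e.g.\ boundary $y=x^2/2$ near the origin with $\Omega$ above it. Then $r(0,y)=y$ for $y\le1$ and $r(0,y)=\sqrt{2y-1}$ for $y\ge1$. So no analytic function near $(0,1)$ agrees with $r$ on either side of the edge, and a chamber may also wrap around such an endpoint.
\item \emph{Finite analytic stratification.} This is weaker than finitely many Lipschitz patches.
\item \emph{Your fallback.} It presupposes a lower density bound that is uniform up to the wall, which is the same missing control.
\end{itemize}

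The semianalytic idea does work if you apply it globally rather than chamber by chamber:
\begin{itemize}
\item $\partial\Omega$ and $\ca$ are compact subanalytic sets, so $r$ and $r_1$ are globally subanalytic. So is $\theta$ on $[0,1]$, and hence so is $r_{k,1}$ on $[0,1]^k$.
\item For all but finitely many $t$, the level set $r_{k,1}^{-1}(t)$ is then a compact subanalytic set of dimension at most $k-1$.
\item Kurdyka's L-regular decomposition writes it as a finite union of Lipschitz images of bounded subsets of $\real^{k-1}$.
\item Lemma~\ref{lem:finiterect} and Theorem~\ref{thm:rect2mink} then finish the argument.
\end{itemize}
This route needs neither Sard's theorem, WoS regularity nor chamber regularity.

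Separately, chamber regularity does not make the submanifolds compact. Your treatment of $\cl(t)\cap\cek$ should therefore be localized to $U_{\bsp}\cap\ce$ as in the paper. Also, $\ch_{k-1}(\cl(t)\cap\cek)<\infty$ is not automatic, though the coarea bound already covers it.
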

\begin{proof}
For $t>0$, define the level set $\cl(t) = r_{k,1}^{-1}(t)$.
The function $r_{k,1}$ is  Lipschitz because it is
a composition of Lipschitz functions.

The function $r_{k,1}$ is real analytic on $\toru^k\setminus \cek$
by induction starting with the constant function $\bsz_0$ and using
Dudek and Holly's Theorem~\ref{thm:itssmooth} and analyticity of $\theta$
at each step of an induction argument for $\bsz_j = \bsz_{j-1}+r(\bsz_{j-1})\theta(x_j)$.
Because $r_{k,1}$ is Lipschitz, it has a gradient almost everywhere
on $\toru^k$ by Rademacher's Theorem (Theorem~\ref{thm:rademacher}).
Then the coarea formula 
(Theorem~\ref{thm:coarea}) gives
\begin{align}\label{eq:coarea}
\int_{[0,\infty)}\ch_{k-1}(r_{k,1}^{-1}(t))\rd t
=\int_{(0,1)^k} \Vert \nabla r_{k,1}(\bsx)\Vert\rd\bsx \le L_{k,1}<\infty
\end{align}
where $\ch_{k-1}$ is the $k-1$ dimensional Hausdorff measure,
$L_{k,1}$ is the Lipschitz constant for $r_{k,1}$.

It follows that almost all level sets of $r_{k,1}$ have finite $k-1$ dimensional
Hausdorff measure.  That is not yet enough to give  a finite $k-1$ dimensional
Minkowski content.  We will use
Theorem~\ref{thm:rect2mink} from \cite{fede:1969}.
For every point $\bsp\in\cl(t)$ we will find an open set $U_{\bsp}\in\toru^k$
on which $\cl(t)$ is finitely $k-1$ rectifiable.
These sets cover $\cl(t)$ which is compact and so they have a finite
subcover.  Then Lemma~\ref{lem:finiterect} shows that $\cl(t)$ is $k-1$
rectifiable which is what we need to apply Theorem~\ref{thm:rect2mink}.
There are two cases to consider.  Either $\bsp\in \cl_0(t) = \cl(t)\cap\cek^c$
or $\bsp\in \cl_1(t) =\cl(t)\cap\cek$.

We consider $\bsp\in\cl_0(t)$ first.
The function $r_{k,1}$ is real analytic on $\cl_0(t)$ and so
we may apply Sard's theorem  (Theorem~\ref{thm:sards})
to show that almost every value of $t$ is
a regular value of $r_{k,1}$ restricted to an open neighborhood of  $\toru^k\setminus \cek$. For such $t$, $\nabla r_{k,1}$ is nonzero on $\cl_0(t)$
and then by the pre-image theorem (Theorem~\ref{thm:preimage})
$\cl_0(t)$
is a $C^\infty$ submanifold of dimension $k-1$.  The smoothness level has been
reduced from analytic to $C^\infty$ because
our version of the pre-image is just for $C^\infty$.  This will be enough for us.

For a regular value $t$ of $r_{k,1}$, fix  $\bsp\in\cl_0(t)$.
Then we can  apply the implicit function theorem (Theorem~\ref{thm:implicit})
to a real-valued function $\Phi$ defined on a neighborhood of $\bsp$ 
with $\Phi(\bsx) = r_{k,1}(\bsx)-t$.
Recall that the implicit function theorem can be used for functions on $\toru^{k-1}$.
Because $t$ is a regular value of $r_{k,1}$, at least one component of $\nabla \Phi(\bsp)$
is not zero. Let that component be $j\in\{1,2,\dots,k\}$.
Now for $\bsp_{-j}=(p_1,\dots,p_{j-1},p_{j+1},\dots,p_k)\in\toru^{k-1}$ there
are neighborhoods $V$ of $\bsp_{-j}$ and $W$ of $p_j$
and a $C^\infty$ function $F:V\to W$ on which 
$$\Phi^{-1}(0)\cap(V\times W)= \{(\bsx_{-j}, F(\bsx_{-j}))\mid \bsx_{-j}\in V\}.$$
That is $\cl_0(t)\cap (V\times W)$ is locally the graph of a $C^\infty$ 
function over an open subset of $\real^{k-1}$.
Then on a possibly smaller neighborhood around $\bsp\in\cl_0(t)$, $F$ is Lipschitz.
Then $\cl(t)$ is rectifiable on that neighborhood.

The case of $\bsp\in \cl_1(t)$ is more complicated.
Now $\bsp$ belongs to a smooth $k-1$ manifold where
$r_{k,1}$ fails to be smooth but is still Lipschitz.
Here we use our chamber regularity condition.
Suppose that we can pick a small neighborhood $U_{\bsp}$ of $\bsp$
that intersects only one of the smooth submanifolds in $\cek$, call it $\ce$.
That $\ce$ partitions $U_{\bsp}$ into three parts, $U_{\bsp,+}$, $U_{\bsp,-}$ and
$U_{\bsp,0}=U_{\bsp}\cap \ce$.  The chambers $U_{\bsp,+}$ and $U_{\bsp,-}$ 
are on `opposite sides' of $\ce$.
See Figure~\ref{fig:twosurfaces}.

More generally, there may be $s\ge1$ submanifolds of $\cek$
that intersect $U_{\bsp}$.  Then within $U_{\bsp}$ there are a finite
number of disjoint $k$ dimensional open chambers separated by the
submanifolds, generalizing $U_{\bsp,\pm}$.
Each of those chambers intersects $\cl(t)$ in a $k-1$ rectifiable
set by the same implicit function theorem argument we used in $\cl_0(t)$.

The remaining points of $\cl(t)\cap U_{\bsp}$ lie within the chamber boundaries. They are in the union of $s$  $k-1$ dimensional sets 
of the form $U_{\bsp}\cap \ce$
where $\ce$ is one of the submanifolds of $\cek$.  
Next, we show that $\cl(t)\cap U_{\bsp}\cap \ce$ is $k-1$ rectifiable.
Because $\ce$ is smooth and $U_{\bsp}$ is open $U_{\bsp}\cap\ce$
is a smooth $k-1$ manifold and hence is $k-1$ rectifiable.  Then
$\cl(t)\cap U_{\bsp}\cap\ce$, as a subset of $U_{\bsp}\cap\ce$,
is also $k-1$ rectifiable.  
Now $\cl(t)\cap U_{\bsp}$ is
the union of finitely many $k-1$ rectifiable chambers and
$s$  $k-1$ rectifiable sets from within the manifolds $\ce$.
That makes it $k-1$ rectifiable by Lemma~\ref{lem:finiterect}.

To complete the proof cover each point $\bsp\in\cl(t)$ by an open set $U_{\bsp}$ on which $\cl(t)\cap U_{\bsp}$ is $k-1$ rectifiable.
Then $\cl(t)$ is $k-1$ rectifiable by applying Lemma~\ref{lem:finiterect}
over the finite subcover mentioned above. Then $\bm_{k-1}(\cl(t))<\infty$ for almost all~$t$.
\end{proof}

\begin{figure}[t!]
\centering
\vspace*{-1.5cm}
\includegraphics[width=.9\hsize]{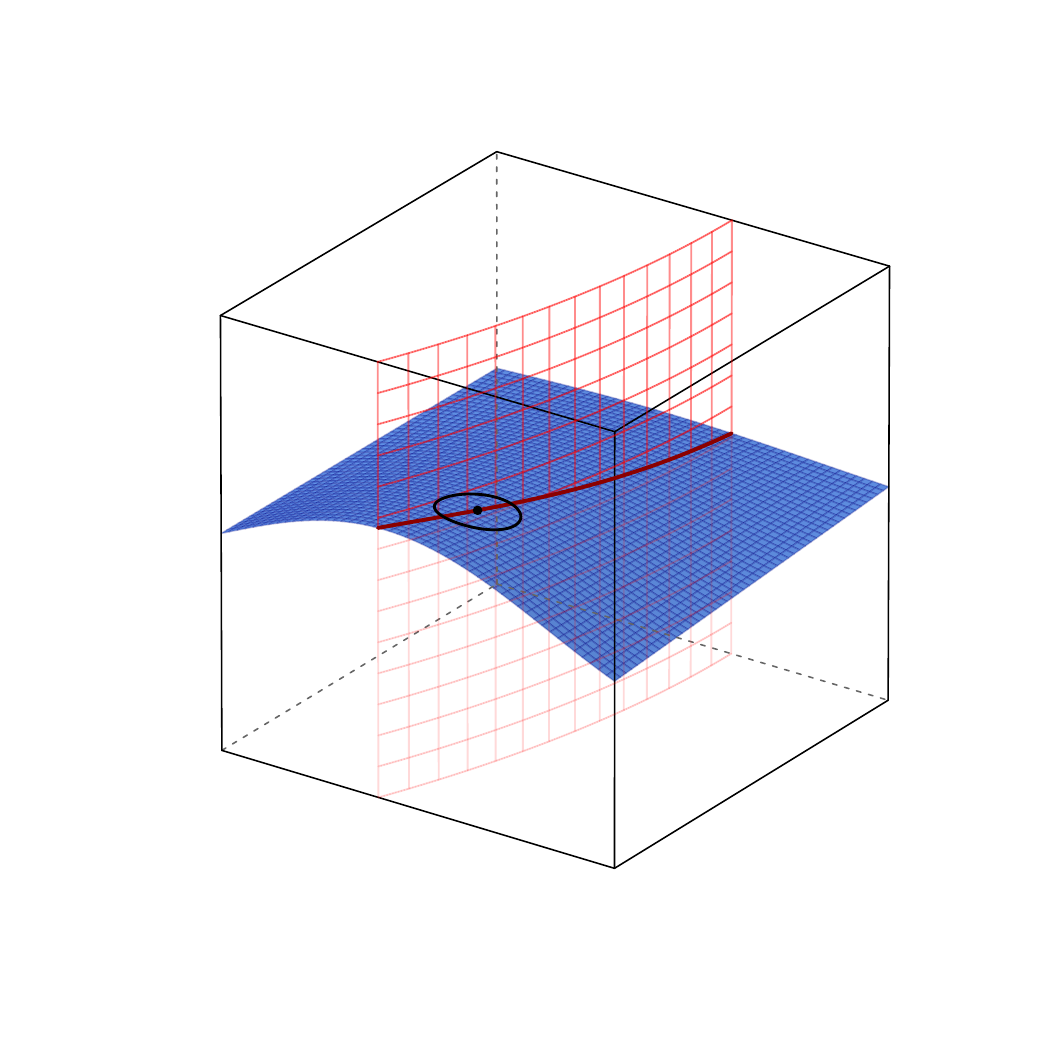}
\vspace*{-1.5cm}
\caption{\label{fig:twosurfaces} 
This figure illustrates a portion of the proof of Theorem~\ref{thm:wosok}
for $k=3$ (using nonperiodic functions).  The horizontal manifold (blue)
is the level set $\cl(t)$.  The vertical manifold (red) is the exceptional
set $\ce$ where the distance function is not smooth.  The intersection
of those manifolds is shown as a dark curve (red). The black point
is $\bsp\in\cl(t)\cap\ce$. The black curve is $\cl(t)\cap\partial U_{\bsp}$.
The neighborhood $U_{\bsp}$ has three parts, a middle part $U_{\bsp}\cap\ce$
and two parts on opposite sides of $\ce$.
}
\end{figure}

\section{Randomized quasi-Monte Carlo for WoS}\label{sec:rqmcforwos}

Here we use the prior results on Minkowski content to
understand how RQMC works on WoS problems.
For integers $k\ge1$ we can use RQMC points to estimate
\begin{align*}
\eta_{k,\ell} &= 
\int_{[0,1)^k} 1\{ r_{k,\ell}(\bsx) < \varepsilon\}\rd\bsx\quad\text{by}\quad 
\hat\eta_{k,\ell} = 
\frac1n\sum_{i=1}^n 1\{ r_{k,\ell}(\bsx_i) < \varepsilon\}
\end{align*}
for $\ell=0,1$.
By the finite $(k-1)$-dimensional Minkowski content of the level sets
established in the previous subsection, we can apply
Theorem 4.4 of \cite{he:wang:2015}
to show for almost all~$\varepsilon$ that
$\var( \hat\eta_{k,\ell} ) = O( n^{-1-1/k})$.
We may not know in practice whether our $\varepsilon$ value is a favorable
one.  By the same token, it would be an odd coincidence for the geometry
of $\ca$ and $\partial\Omega$ to make $\varepsilon$ an unfavorable value.
Similarly, the results from \cite{liu:2025} support this rate for $h(\bar{\bsz}_k(\bsx))1\{r_{k,\ell}(\bsx)<\varepsilon\}$ when $h(\bar{\bsz}_k(\bsx))$ is sufficiently smooth.

Next, we estimate
$$
\mu_{k,1}
=\int_{[0,1)^k} 1\{ r_{k,1}(\bsx) < \varepsilon\}
\times\prod_{j=1}^{k-1} 1\{ r_{j,1}(\bsx) \ge \varepsilon\}
\times\prod_{j=1}^{k-1} 1\{ r_{j,0}(\bsx) \ge \varepsilon\}
\rd\bsx
$$
by
$$
\hat\mu_{k,1} = \frac1n\sum_{i=1}^n
 1\{ r_{k,1}(\bsx_i) < \varepsilon\}
\times\prod_{j=1}^{k-1} 1\{ r_{j,1}(\bsx_i) \ge \varepsilon\}
\times\prod_{j=1}^{k-1} 1\{ r_{j,0}(\bsx_i) \ge \varepsilon\}.
$$
The integrand in $\mu_{k,1}$ is the indicator of the intersection
of $2k-1$ sets. The boundary of the intersection is contained
in the union of those $2k-1$ boundaries each of which has $k-1$ dimensional
Minkowski content for almost all $\varepsilon$.  As a result, this set either has $k-1$ dimensional
Minkowski content or has upper $k-1$ dimensional Minkowski
content of zero and so we get the same rate. 

Our estimate of $\mu_1 = \e( u(\bsz_0))$ is then
$\hat\mu_1=\sum_{k=1}^K \hat\mu_{k,1}$
for large $K$. This large value of $K$ introduces
a small bias in addition to the $O(\varepsilon)$ bias that comes from
using $\varepsilon>0$.
The $k$-th term in $\hat\mu_1$ has variance $O(n^{-1-1/k})$. An empirical rate like $n^{-1.1}$ can be interpreted as the integrand being effectively $10$ dimensional.  This doesn't mean that $\hat\mu_{10,1}$ has the most variance.  The variance of the sum includes faster and slower terms. We expect that the variance includes a contribution larger than $cn^{-1-1/K}$ for some $c>0$ whenever $n\ge N$ for a possibly very large $N$. We saw no sign of that slowing in our examples with $n\le 2^{17}$.


\section{Further examples and algorithms}\label{sec:moreexamples}

Here we present some further numerical examples
to explore different aspects of the RQMC for WoS problem. We consider examples with a known solution that lets us investigate bias, and we vary the gasket example to make it less favorable to RQMC. We also explore examples not covered by our theory, having nonzero source terms or having $\Omega\subset\real^3$. What we see in those examples is that our sufficient conditions do not appear to be necessary. We also considered some alternative samplers designed to mitigate the non-smoothness of the WoS problem.

\subsection{Unit disk example}
\cite{masc:hwan:2003} consider the Dirichlet BVP where $\Omega=B_2(\bszero,1)$ with $\Delta u(\bsz)=0$ for $\bsz\in\Omega$ and
\begin{align}\label{eq:mhdisk}u(\bsz)=\frac{1}{2}\ln[(z_1-2)^2+z_2^2],\textrm{ for } z\in \partial\Omega.
\end{align}
Because $\Delta u(\bsz)=0$ for $\bsz\ne(2,0)$ the formula in~\eqref{eq:mhdisk} also works for $\bsz\in\Omega$. We can use that to estimate bias and mean squared error (MSE).

Starting from $\bsz_0=(0,0.5)$, we run the RQMC-WoS algorithm, terminating once the walk hits the $\varepsilon$-shell of $\Omega$, for $\varepsilon=10^{-4}$. We run $100$ replicates for each sample size.
The results are shown in Figure~\ref{fig:naive1}. The bias in this example is negligible as expected and so the variance is essentially the MSE.  

Figure 2 of \cite{masc:kara:hwan:2004} shows results from a QMC solution on $n=20$ trajectories attaining an error of roughly $10^{-8}$.  They do not specify the QMC points in use or the starting point $\bsz_0$, and their Figure 3 shows essentially no difference between MC and QMC solutions for $n$ ranging to about $10^5$ in another test problem where they do give $\bsz_0$ and name the sequence. As a result, we do not think that the reported accuracy in their Figure~2 is directly comparable to the errors shown in our Figure~\ref{fig:naive1}.

\begin{figure}
    \centering
    \includegraphics[width=0.8\linewidth]{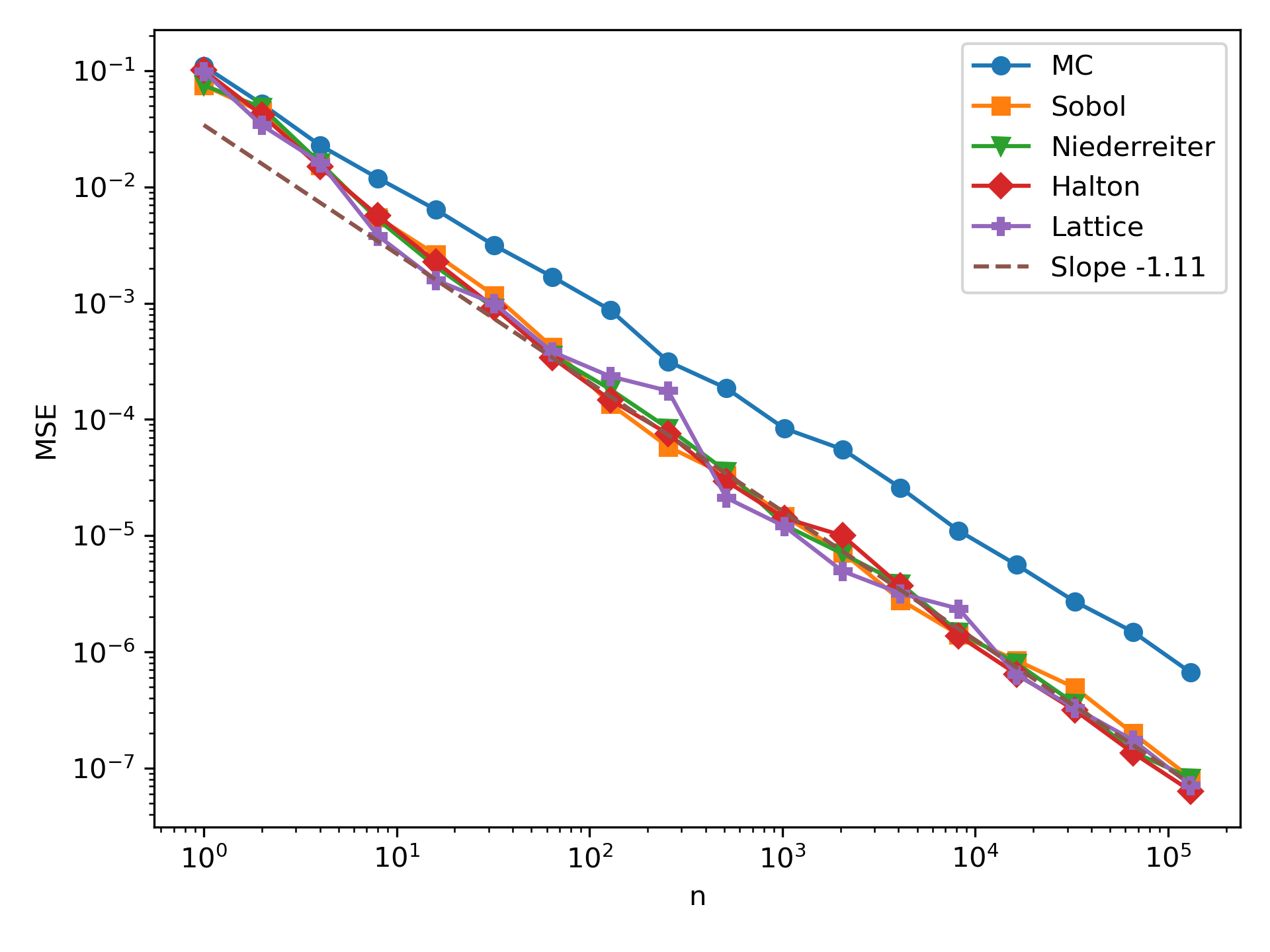}
    \caption{MSE of the WoS estimator for the unit disk example, starting at $\bsz_0 = (0,0.5)$ and using $\varepsilon=10^{-4}$.}
    \label{fig:naive1}
\end{figure}

\subsection{Gasket revisited}

We tried another starting point, midway between the two central bore holes, at $\bsz_0=(0.0030105,0.002839)$. This point is of interest because it is very close to two of the hottest parts of $\partial\Omega$.  What we found was that the RQMC algorithms gave essentially the same accuracy as MC did.
Most walks from this starting point hit one of the two nearby bore holes. Since these two bore holes have the same boundary temperature, deviations from that common value mainly come from walks that reach more distant boundary components, which typically require larger values of $k$. Since RQMC is less favorable for larger $k$, this can weaken the observed variance rate. To test this explanation, we lowered the temperature of the left bore hole to $140^\circ$C while leaving the right bore hole temperature at $160^\circ$C. This increased the importance of the walks hitting the nearby bore holes, and then we saw an RQMC variance rate of about $n^{-1.1}$.

We also investigated the bias in our first gasket example.  We averaged estimates from $100$ MC replicates with $n=2^{18}$ trajectories each and used that as a reference value.  We found very small bias and the variance and MSE were quite close.

\subsection{Major circular sector example}
We consider another two-dimensional example from \cite{masc:hwan:2003}. In polar coordinates, $\Omega=\{(r,\theta)\mid 0\le r\le1, -3\pi/2\le\theta\le0\}$, resembling the 1980s video game character Pac-Man rotated through $\pi/4$ radians.

This example has a nonzero source term, $\Delta u(r,\theta)=-(2-r^2)e^{-r^2/2}$ and so we use the WoS update from Section~\ref{subsec:source}. That update takes $3$ uniform variables per step, one for the angle to sample on the circle and two to sample in the disk.
Our theory does not cover this case because we have not studied the RQMC error for an integrand that evaluates $\Delta u$ uniformly over the WoS disk at step $k$, so it is of interest to see how RQMC performs.

The boundary conditions are
$u(r,0)=e^{-r^2/2}$, $u(r,-3\pi/2)=-r^{1/3}+e^{-r^2/2}$ and $u(1,\theta)= \sin(\theta/3)+e^{-1/2}$. 
The analytic solution is 
$u(r,\theta)=r^{1/3}\sin(\theta/3)+e^{-r^2/2}.$

Using RQMC points in this WoS problem gives the results shown in Figure \ref{fig:ex2_naive_mse}. We again see a slightly better rate of convergence for the MSE of the RQMC-WoS estimator. This example has an anomaly that we have not seen in any other: RQMC with the Niederreiter points performs worse than plain MC sampling does until about $n=2^{14}$ where it has nearly equal performance. It shows a rate comparable to the other RQMC methods but has a larger constant factor.

\begin{figure}[t]
    \centering
    \includegraphics[width=0.8\linewidth]{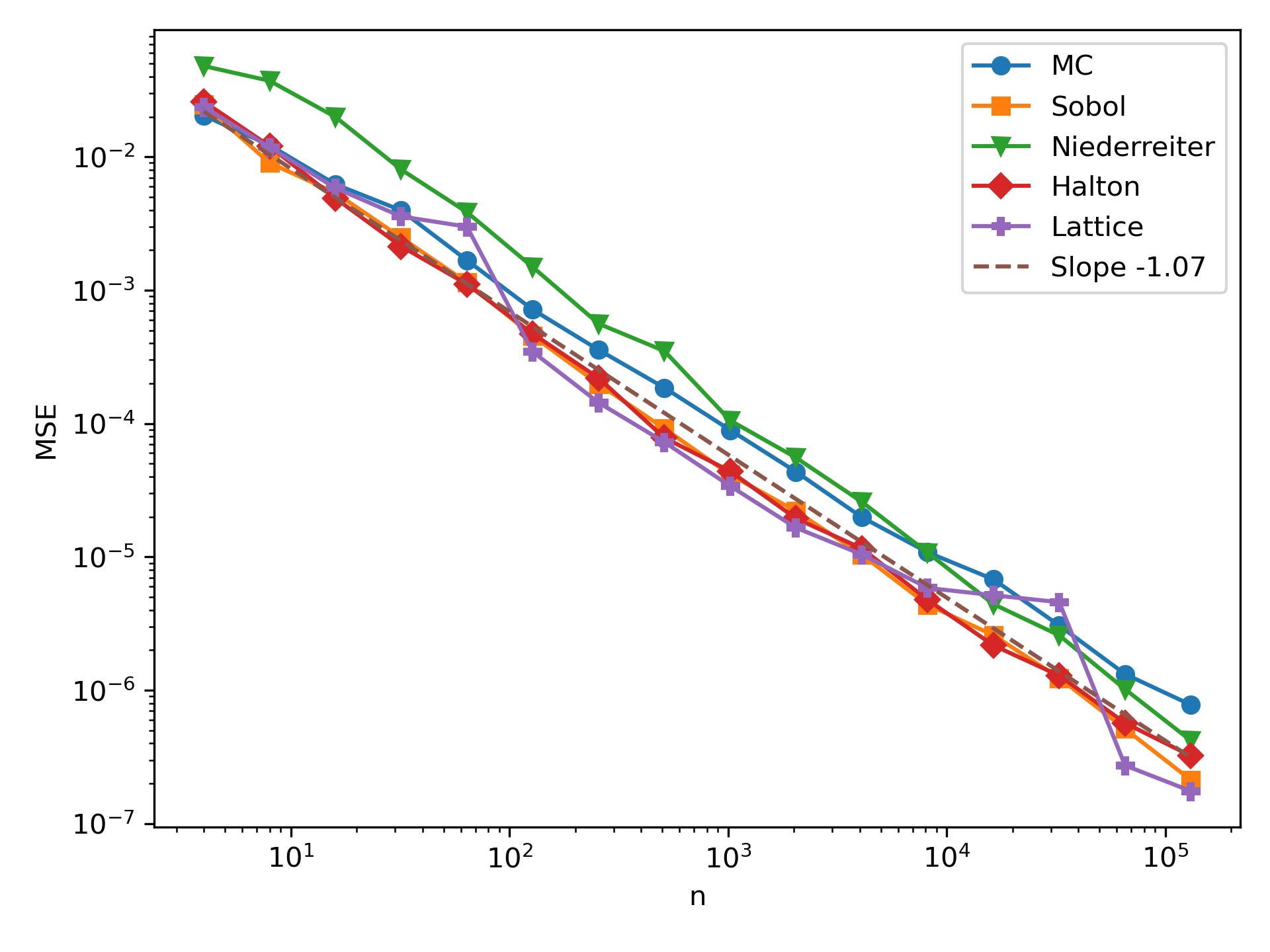}
    \caption{MSE of the standard WoS estimator for the major circular sector domain, for $\bsz_0=(r_0 \cos(\theta_0),r_0 \sin(\theta_0))$ with $(r_0,\theta_0)=(0.1244,-0.7906)$, and $\varepsilon=10^{-4}$.}
    \label{fig:ex2_naive_mse}
\end{figure}

\subsection{Dumbbell example}\label{sec:dumbbell}

This example is inspired by \cite{lund:rama:2019} and \cite{magn:pogg:2022}. They respectively study the number of critical points of $u$ and their distance to the boundary of their domain $\Omega$. Their $u$ is the solution to the BVP with
\begin{equation}\label{eq:dumbbell}
\text{$\Delta u(\bsz) = -2$,\,\ for $\bsz \in \Omega$}, \quad\text{and}\quad
u(\bsz) = 0, \,\ \text{for $\bsz \in \partial\Omega$}.
\end{equation}
Their $\Omega=B_2((-L,0),R)\cup([-L,L]\times [-w,w])\cup B_2((L,0),R)$
is a dumbbell-shaped region, the union of a $2L\times 2w$ bar with tiny $w>0$ and two circles of radius $R<L$. See Figure \ref{fig:dumbbell}. We will use RQMC-WoS to evaluate $u$ at a point $\bsz_0=(L-R,0)$ marked in the figure. Because this problem includes a source term it is not covered by our theorems, and instead provides insight into whether the conditions there are necessary. The interpretation in \cite{magn:pogg:2022} is that $u(\bsz)$ is the flow velocity of a fluid through a pipe whose cross-sectional shape is $\Omega$.

\begin{figure}
    \centering
    \includegraphics[width=1\linewidth]{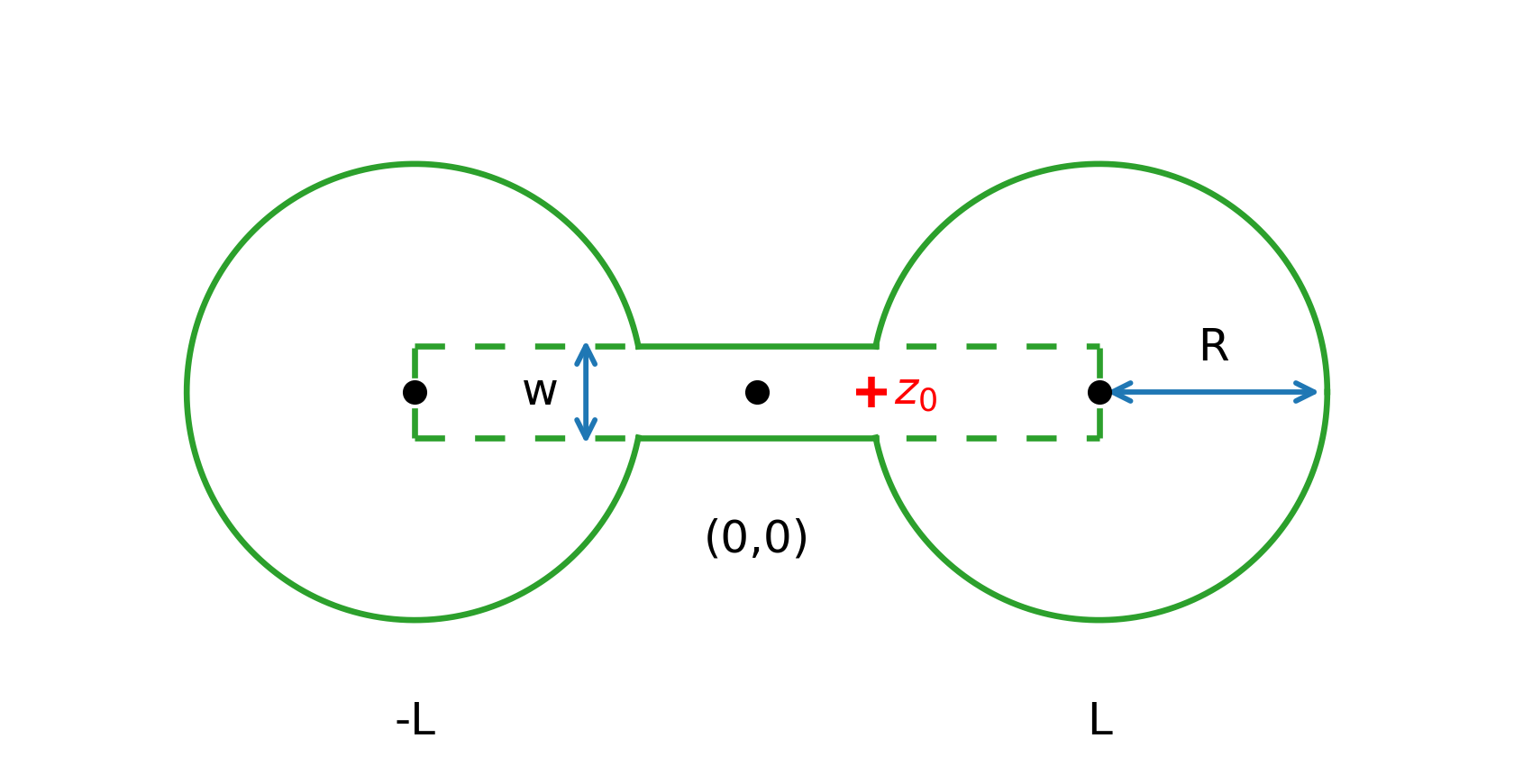}
    \caption{Dumbbell-shaped domain with parameters $R=1$, $L=1.5$ and $w=0.4$. The starting point $\bsz_0=(0.5,0)$ used in our experiments is marked with a red cross.}
    \label{fig:dumbbell}
\end{figure}

We can use equation~\eqref{eq:wosconstsource} on this problem. 
Then an MC or RQMC based estimate of $u(\bsz_0)$ takes the form
$$\hat{u}(\bsz_0)=\frac{1}{2n}\sum_{i=1}^n\sum_{k=1}^{\tau_{i}}
\dist(\bsz_{i,k-1},\partial\Omega)^2
$$
where $\tau_i = \tau_{i,\varepsilon}=\min\{k\in\natu_0\mid \dist(\bsz_{i,k},\partial\Omega)<\varepsilon\}$.

Using MC and RQMC samples, we get the results presented in Figure \ref{fig:dumbbell_var}. Again, we observe that using RQMC points brings a better rate of convergence for the variance of the WoS estimator.

\begin{figure}
    \centering
    \includegraphics[width=0.8\linewidth]{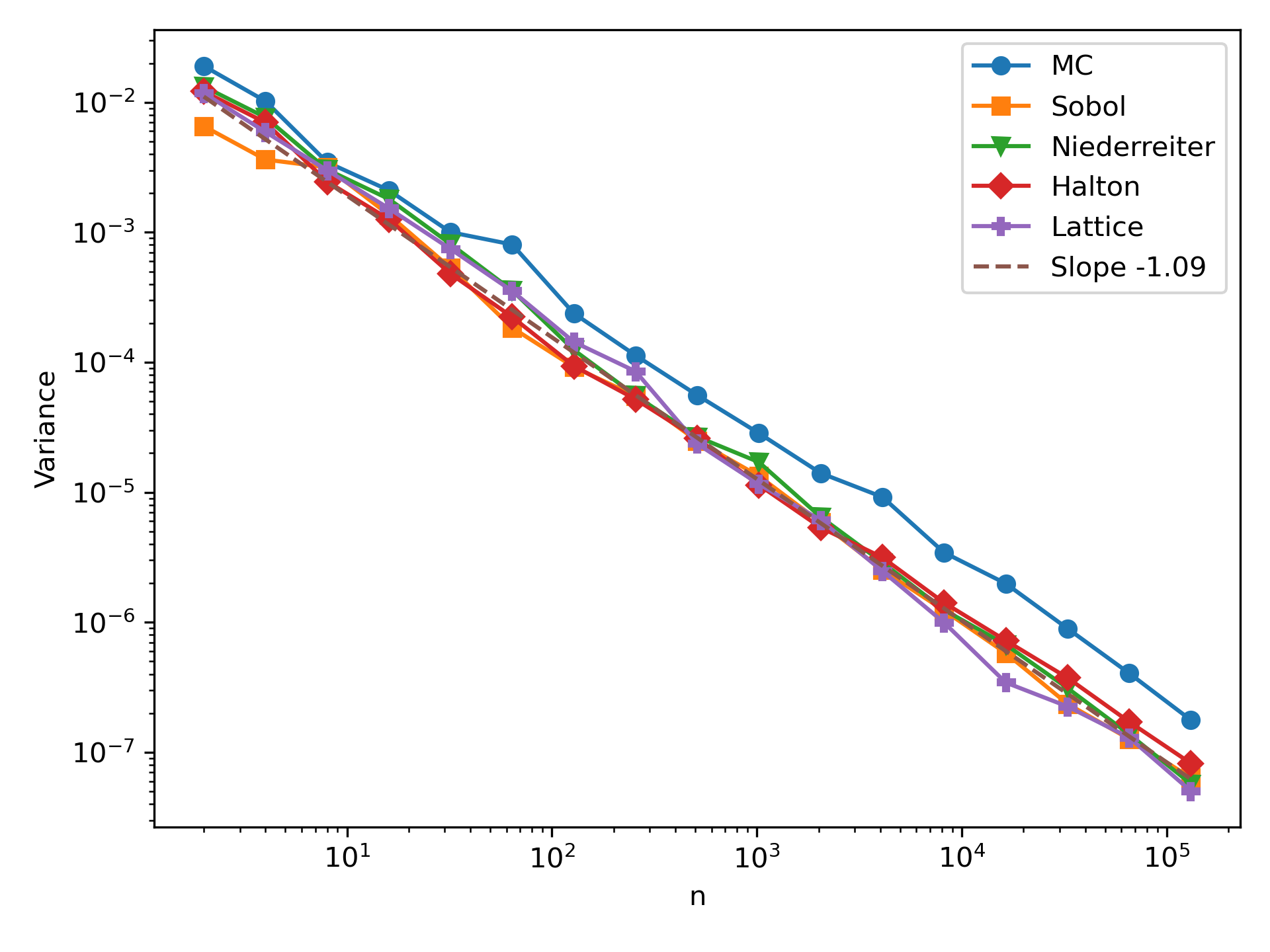}
    \caption{Variance of the WoS estimator for the dumbbell example, started at $z_0=(0.5,0)$ on the dumbbell domain, with $R=1$, $L=1.5$, $w=0.4$ and $\varepsilon=10^{-4}$.}
    \label{fig:dumbbell_var}
\end{figure}

\subsection{Unit ball example}
Next, we use a three dimensional example from \cite{masc:hwan:2003}, with $\Omega=B_3(\bszero,1)$. The BVP has $\Delta u(\bsz)=0$ for $z\in \Omega$ with boundary condition
$u(\bsz)=[(z_1-2)^2+z_2^2+z_3^2]^{-1/2}$ for $\bsz\in\partial\Omega$.
The analytic solution has the same formula as the boundary condition. 
To generate a uniform sample on the sphere, we use the hatbox transformation~\eqref{eq:hatbox} described in Section~\ref{sec:wosinrqmc}

The hatbox transformation $\psi_0$ is not Lipschitz, so the analysis in Section \ref{sec:main} does not apply to this update. Figure~\ref{fig:ex3_naive_rmse} shows an apparent RQMC  variance of $O(n^{-1.14})$ which we take as evidence that our sufficient conditions are not necessary conditions.

\begin{figure}[t]
    \centering
    \includegraphics[width=0.8\linewidth]{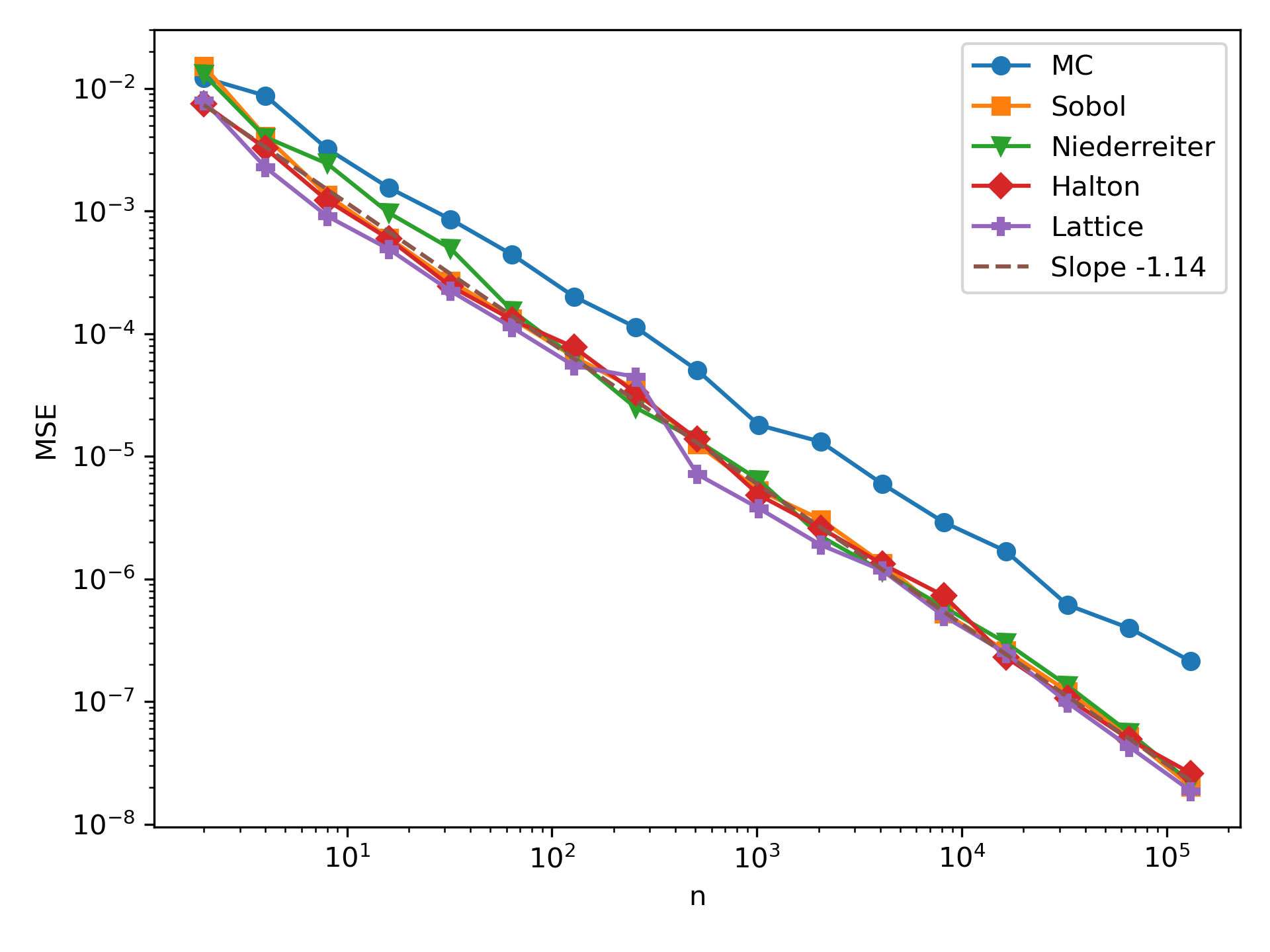}
    \caption{MSE of the standard WoS estimator for the unit ball example, for $z_0=(0.2, 0.3, -0.1)$ and $\varepsilon=10^{-4}$.}
    \label{fig:ex3_naive_rmse}
\end{figure}

\subsection{Summary of examples}\label{sec:summary}
\begin{table}[t]
\centering
\setlength{\tabcolsep}{3.5pt}
\renewcommand{\arraystretch}{1.08}
\small
\begin{tabular}{lcccccccccc}
\toprule
& \multicolumn{2}{c}{Sobol} & \multicolumn{2}{c}{Lattice} & \multicolumn{2}{c}{Halton} & \multicolumn{2}{c}{Niederreiter} & \multicolumn{2}{c}{MC} \\
\cmidrule(lr){2-3}\cmidrule(lr){4-5}\cmidrule(lr){6-7}\cmidrule(lr){8-9}\cmidrule(lr){10-11}
Example & $\beta$ & $\alpha$ & $\beta$ & $\alpha$ & $\beta$ & $\alpha$ & $\beta$ & $\alpha$ & $\beta$ & $\alpha$ \\
\midrule
  Gasket & $-$1.10 & $\phm$5.78 & $-$1.08 & $\phm$5.81 & $-$1.11 & $\phm$5.98 & $-$1.14 & $\phm$6.13 & $-$1.01 & $\phm$6.41 \\
  Unit disk & $-$1.04 & $-$3.90 & $-$1.15 & $-$3.03 & $-$1.13 & $-$3.27 & $-$1.12 & $-$3.28 & $-$1.01 & $-$2.31 \\
  Dumbbell & $-$1.08 & $-$3.87 & $-$1.16 & $-$3.24 & $-$1.02 & $-$4.28 & $-$1.10 & $-$3.60 & $-$1.02 & $-$3.38 \\
  Pac-Man & $-$1.09 & $-$2.47 & $-$1.00 & $-$3.15 & $-$1.05 & $-$2.77 & $-$1.17 & $-$0.89 & $-$0.98 & $-$2.52 \\
  Unit ball & $-$1.16 & $-$3.99 & $-$1.15 & $-$4.26 & $-$1.15 & $-$3.99 & $-$1.12 & $-$4.27 & $-$1.00 & $-$3.71 \\
\bottomrule
\end{tabular} 
\caption{Regression coefficients for $\log(\widehat{\var}(\hat\mu))\doteq\alpha+\beta \log(n)$ fit to WoS methods using $n\ge 128$. 
}
\label{tab:rqmc_slopes_intercepts}
\end{table}

Table~\ref{tab:rqmc_slopes_intercepts} shows regression fits of log variance (or log MSE when available) versus $\log(n)$ for our sampling methods and examples. Most of the curves looked to be linear with a little noise, but some of them showed nonlinearities at small $n$. We used least squares on the data with $7\le\log_2(n)\le 17$.  The Niederreiter points had quite bad performance for the Pac-Man example. While they had the best slope they had a very large constant term.

Table~\ref{tab:vrf_17} computes variance reduction factors corresponding to the slopes and intercepts in Table~\ref{tab:rqmc_slopes_intercepts}. The variance reduction factors are modest though it is clear that they improve somewhat for larger $n$.

\begin{table}[t]
\centering
\sisetup{table-format=2.1, table-number-alignment=center}
\begin{tabular}{l S S S S}
\toprule
Example & {Sobol} & {Lattice} & {Halton} & {Niederreiter} \\
\midrule
Gasket    &  5.4 &  4.2 &  5.0 &  6.1 \\
Unit disk &  7.0 & 10.7 & 10.7 & 9.6 \\
Dumbbell  &  3.3 &  4.5 &  2.5 &  3.2 \\
Pac-Man    &  3.5 &  2.4 &  2.9 &  1.8 \\
Unit ball &  8.7 & 10.2 &  7.7 &  7.2 \\
\bottomrule
\end{tabular}
\caption{Estimated variance reduction factors at $n=2^{17}$.}
\label{tab:vrf_17}
\end{table}



\subsection{Alternative samplers}

For $\Omega\subset\real^2$, the position $\bsz_k$ is a periodic function on $[0,1)^k$, which is an advantage for the lattice sampling methods.  The decision to stop or not to stop at step $k$ introduces a discontinuity which violates the smoothness that lattice samplers need for high accuracy.  Additionally, the function $u$ is not necessarily a smooth function on $\partial\Omega$.

We ran the algorithm with fixed large values of $k$ which then give small $\dist(\bsz_k,\partial\Omega)$ with high probability. This strategy has been used by \cite{wu:etal:2025}.  
After the $k$ steps, we project $\bsz_k$ to $\partial\Omega$ and evaluate $h$ at that projected point. This removes the discontinuity due to the stopping rule, leaving a fixed-step integrand based on the Lipschitz map $x\mapsto \bsz_k(x)$. We did not see the lattice sampler outperform other RQMC methods for this algorithm except for very small values, such as $k=3$, where the bias is too large for the method to be useful.  We considered the unit disk case taking $k=20$ steps toward the boundary, where $h(\bsz)$ is a smooth function. This is the smoothest example we considered and the lattice rule did not show an advantage over the other RQMC methods.

A harmonic function has $u(\bsz_0)=\e(u(\bsz))$ for $\bsz\sim\runif(S(\bsz_0,r))$. By averaging over radii, the same is true for $\bsz\sim\runif(B(\bsz_0,r))$. \cite{wu:etal:2025}   include an algorithm where the first step is uniform over $B(\bsz_0,r_0)$ to make the estimate a smoother function of $\bsz_0$. We investigated such steps but they did not produce an algorithm that made lattice methods dominant.

\section{Discussion}\label{sec:discussion}

We have replaced MC sampling by RQMC sampling in the WoS algorithm for a collection of BVP problems in $\real^2$ and $\real^3$. We see rates that are close to $n^{-1-1/\tilde k}$ for some `effective' dimension $\tilde k$ over values of $n$ up to $2^{17}$. The actual error has contributions from many values of $k$, and there is likely to be a nonzero contribution from $k=K$, leading to a rate of only $O(n^{-1-1/K})$, but there is no evidence that such a pessimistic rate will be relevant for sample sizes near the ones we have used.
The unit disk example has a very smooth boundary function and illustrates the theorem from \cite{liu:2025}.

The Pac-Man example uses $3k$ uniform variables to take $k$ steps because it also samples disks at each step.  The inferior convergence rate we saw for it could be due to this increased dimension.  The integrand is only discontinuous in $k$ of those variables used to define whether the walk terminates in $k$ steps. 

\section*{Materials and methods}

We used ChatGPT, Claude and Gemini in parts of this work. The versions we used were from the early months of 2026.  Those AIs helped us find relevant literature much more quickly than we could have done with a search engine. Many suggested references were not actually useful and had to be discarded but others were critical.  The AIs found weaknesses in some early versions of our proofs and made suggestions.  Quite often the AI suggestions were naive about subtleties such as the varying definitions of rectifiability and we rejected many AI suggestions.   On the other hand, it was an AI that noticed that the second claim in Theorem~\ref{thm:itswosregular} follows from the same argument as the first claim.  The AIs made some suggestions about writing style and we adopted a few of those for clarity but we wrote everything ourselves.  AI found some typos and incomplete sentences that we corrected.   
They also generated, following our instructions, the Python code used to draw the gasket and dumbbell domains shown in Figure~\ref{fig:gasket} and \ref{fig:dumbbell} respectively, and the R code used to produce Figure~\ref{fig:twosurfaces}, which illustrates the chambers. An AI suggested the term `chamber'. The AIs also pointed us toward useful theorems on manifolds, after which we identified versions that fit the paper without requiring substantial additional background. When we were investigating an apparent sign error in our computations for the dumbbell example, an AI recognized that \cite{sawh:mill:gkio:cran:2023} uses the negative semidefinite Laplacian. The AIs also assisted with LaTeX syntax in resolving formatting issues and with improving several BibTeX entries.

\section*{Acknowledgments}

We thank Rohan Sawhney and Yang Liu for some helpful comments.
The second author visited Bob Carpenter's group at the Flatiron Institute and had productive discussions there with Misha Padidar,  Michael Czekanski, Dan Fortunato and Charles Epstein. 
\bibliographystyle{chicago}
\bibliography{wos}

\end{document}